\numberwithin{equation}{section}
\newcommand \ini {\ensuremath{\mathrm{in}}}
\newcommand \ND {\ensuremath{\mathrm{ND}(1)}}
\newcommand \NDbf {\ensuremath{\mathbf{ND(1)}}}
\newcommand \Tor {\ensuremath{\mathrm{Tor}}}
\newcommand \reg {\mathrm {reg}}
\newcommand \sat {\mathrm {sat}}
\newcommand \codim {\ensuremath{\mathrm{codim}}}
\newcommand \depth {\ensuremath{\mathrm{depth}}}
\newcommand \cha {\ensuremath{\mathrm{char}}}
\def\P{{\mathbb P}}
\newcommand \st[1] {\stackrel{#1}{\longrightarrow}}
\newcommand \Gin {\ensuremath{\mathrm{Gin}}}
\def\ds{\displaystyle}
\newcommand \rrlap[1]{\hbox to 0pt{#1}}
\newcommand \ti[1]{\textit{#1}}
\newcommand \trm[1]{\textrm{#1}}
\newcommand \tbf[1]{\textbf{#1}}
\newcommand{\s}{\mathcal}
\theoremstyle{theorem} 
\newtheorem{Thm}{Theorem}[section]
\newtheorem{Prop}[Thm]{Proposition}
\newtheorem{Coro}[Thm]{Corollary}
\newtheorem{Lem}[Thm]{Lemma}
\theoremstyle{definition}
\newtheorem{Def}[Thm]{Definition}
\newtheorem{Ex}[Thm]{Example}
\newtheorem{Remk}[Thm]{Remark}
\numberwithin{equation}{section}
\begin{document}

\title[1-normality of general linear sections and some graded Betti numbers of 3-regular schemes]{Linear normality of general linear sections and some graded Betti numbers of 3-regular projective schemes}
\author[J.\ Ahn and K.\ Han] {Jeaman Ahn and Kangjin Han${}^{*}$}
\address{Department of Mathematics Education, Kongju National University, 182, Shinkwan-dong, Kongju, Chungnam 314-701, Republic of Korea}
\email{jeamanahn@kongju.ac.kr}
\address{School of Mathematics, Korea Institute for Advanced Study (KIAS),
85 Hoegiro, Dongdaemun-gu, Seoul 130--722, Korea}
\email{kangjin.han@kias.re.kr}
\thanks{${}^{*}$ Corresponding author.}
\thanks{The first author was supported by the research grant of the Kongju National University in 2013 (No. 2013-0535). The second author was supported by Basic Science Research Program through the National Research Foundation
of Korea (NRF) funded by the Ministry of Education (grant No. 2012R1A1A2038506). }



\begin{abstract}
In this paper we study graded Betti numbers of any nondegenerate 3-regular algebraic set $X$  in a projective space $\mathbb P^{n}$. More concretely, via Generic initial ideals (Gins) method we mainly consider `tailing' Betti numbers, whose homological index is not less than $\codim(X,\mathbb P^{n})$. For this purpose, we first introduce a key definition `$\mathrm{ND(1)}$ property', which provides a suitable ground where one can generalize the concepts such as `being nondegenerate' or `of minimal degree' from the case of varieties to the case of more general closed subschemes and give a clear interpretation on the tailing Betti numbers. Next, we recall basic notions and facts on Gins theory and we analyze the generation structure of the reverse lexicographic (rlex) Gins of 3-regular $\mathrm{ND(1)}$ subschemes. As a result, we present exact formulae for these tailing Betti numbers, which connect them with linear normality of general linear sections of $X\cap \Lambda$ with a linear subspace $\Lambda$ of dimension at least $\codim(X,\mathbb P^{n})$. Finally, we consider some applications and related examples. 
\end{abstract}

\keywords{graded Betti numbers, generic initial ideals, linear normality, 3-regular scheme}
\subjclass[2010]{Primary:14N05; Secondary:13D02}
\maketitle
\tableofcontents \setcounter{page}{1}

\section{Introduction}\label{sec_into}

Throughout this paper, we will work with a closed subscheme $X$ of codimension $e$ in $\mathbb P^{n}$ over an algebraically
closed field $k$ of $\cha(k)=0$. Let $I_X$ be $\bigoplus_{m=0}^{\infty} H^0(\mathcal I_{X/\P^n}(m))$, the defining ideal of $X$ in the polynomial ring $R=k[x_0, x_1 ,\ldots, x_{n}]$. We mean (co)dimension and degree of $X\subset\P^n$ by the definitions deducing from Hilbert polynomial of $R/I_X$ (see Convention in section \ref{sec_gins}).

Suppose that $X$ is $3$-regular (in the sense of Castelnuovo-Mumford), that is
\[H^i(\mathcal{I}_{X/\P^n}(3-i))=0~\trm{for all $i\ge1$}~.\]
The category of $3$-regular subschemes is quite large. It contains many known examples such as (embedded) curves of high degree, secants of rational normal scrolls, and del Pezzo varieties. Until now, $3$-regularity seems not to be well-understood. For instance, in contrast with $2$-regularity, there is no classification for $3$-regularity even in the category of varieties. So, it is worthwhile to investigate graded Betti numbers of $3$-regular subschemes.

When $X$ is $3$-regular, we could present the Betti table of $X$ with 3 rows (e.g. see \cite[chap. 4]{E2}) and divide the whole table into four quadrants (by \ti{dashed} lines) as in Figure \ref{fig_betti}.

\begin{figure}[!htb]
\texttt{\begin{tabular}{c|cccccccc:ccccc|}
   &0 &1 &2 & $\cdots$ &a & a+1 & $\cdots$ & e & e+1 &$\cdots$& n-1 & n & n+1\\ \hline 
0 &1 &0 & 0 & $\cdots$ & 0  &    0     & $\cdots$& 0  & 0 & $\cdots$ & 0 & 0  & 0 \\ 
1 &0 &$\beta_{1,1}$ & $\beta_{2,1}$ &$\cdots$ &$\beta_{\mathtt{a},1}$  & $\beta_{\mathtt{a}+1,1}$ &$\cdots$ & $\beta_{\mathtt{e},1}$ &0 &$\cdots$ &0&0&0\\  \hdashline
2 &0 &0 &0 & $\cdots$ & 0 &$\beta_{\mathtt{a}+1,2}$& $\cdots$ &$\beta_{\mathtt{e},2}$ &$\beta_{\mathtt{e}+1,2}$ & $\cdots$&$\beta_{\mathtt{n}-1,2}$& $\beta_{\mathtt{n},2}$& 0\\
\end{tabular}}
\caption{A typical \textsf{Betti table} of any nondegenerate 3-regular projective scheme $X$ in $\P^n$. Here $\beta_{i,j}$ denotes the \ti{graded Betti number} of $X$ $\dim_k\Tor^R_i(R/I_X,k)_{i+j}$, $\mathtt{e}=\codim(X,\P^n)$. $\mathtt{a}=\mathtt{a}(X)$ is often called \ti{Green-Lazarsfeld index} of $X\subset\P^n$.}
\label{fig_betti}
\end{figure}

Once $X$ is nondegenerate (i.e. $I_X$ has no linear forms), in the first row all the Betti numbers except $\beta_{0,0}$ are zeros. On the other entries of $1^{\mathrm{st}}$-quadrant, it was first known by Green's $K_{p,1}$-theorem for compact complex manifolds in \cite{G2} that they are also all \ti{zeros}; we prove this in a more general category using our method (see Theorem \ref{thm:20140212-1} and Remark \ref{remk_NP}). In $2^{\mathrm{nd}}$-quadrant the part corresponding $\beta_{1,1},\beta_{2,1},\cdots$ is called \ti{linear strand} of the table and many authors have studied on this subject of linear syzygies (e.g. see \cite[chap. 8,9]{E2} for an overview of its connection toward Green and Green-Lazarsfeld conjectures of curves or see \cite{HK2} for its relevance to classification of varieties of small degree). For $3^{\mathrm{rd}}$-quadrant, there is a nice geometric interpretation related to the existence of a \ti{degenerate secant plane} $X\cap \Lambda$, which is a finite scheme with $length(X\cap \Lambda)>\dim\Lambda+1$ for a linear subspace $\Lambda$ of dimension $\le e$ (see e.g.\cite[theorem 2]{GL}, \cite[theorem 1.1]{EGHP1}). Recently, the first author and S. Kwak obtained a more refined result on Betti numbers in the $3^{\mathrm{rd}}$-quadrant in this viewpoint (see \cite[theorem 1.2 and example 3.4]{AK}). 

In this paper, to complete the picture of Betti tables of $3$-regular schemes, we focus on the interpretation of the $\beta_{e,2},\beta_{e+1,2},\cdots,\beta_{n,2}$ (we call these \ti{tailing Betti numbers}) in the remaining part, i.e. \ti{fourth} quadrant.

{\bf $\NDbf$ property\quad} As usual, we would like to assume $X$ to be nondegenerate. But, to generalize what we have and expect in varieties to more general schemes, in many cases it is not enough. Therefore, let us begin this study by introducing a definition, which makes things clear, as follows:
\begin{Def}[$\ND$ property]\label{def_ND1}
Let $k$ be any field as above. We say that a closed subscheme $X\subset\P_k^n$ satisfies $\ND$ \ti{property} if for a \ti{general} linear section $\Lambda$ of each dimension $\geq e$
\[\trm{$X\cap \Lambda$ is nondegenerate (i.e. $H^0(\mathcal{I}_{X\cap \Lambda/\Lambda}(1))=0$).}\] 
\end{Def}

Note that by definition $X\subset\P^n$ itself is nondegenerate if $X$ satisfies $\ND$ property. We also remark that every general linear section of $X\cap \Lambda$ also has the same property in case that $X$ has $\ND$ property.

This definition of $\ND$ property is very natural. For example, every nondegenerate integral (i.e. irreducible and reduced) variety has $\ND$ property. In fact, the category of closed subschemes having $\ND$ property is quite large; we do not need to assume our subscheme $X$ to be \ti{irreducible, equi-dimensional,} or even \ti{reduced} necessarily (see Example \ref{ND_ex}). Further, this is also proper in the sense that via this property one could generalize the concepts such as `being nondegenerate' and `of minimal degree' (i.e. degree is no less than codimension+1) to more general reducible subschemes and give a very clear meaning of the tailing Betti numbers (see Proposition \ref{basic_ineq}, Theorem \ref{thm:20140212-1} and Theorem \ref{main}).

\begin{Ex}[Subschemes with $\ND$ property]\label{ND_ex}
We list the following examples of subschemes with $\ND$ property:
\begin{itemize}
\item[(a)] In case of $X$ being a nondegenerate integral variety, $\ND$ property of $X$ can be deduced from Bertini type theorem. More generally, if any algebraic set $X$ has a nondegenerate component, then $X$ also satisfies $\ND$ property.
\item[(b)] (Case of all the components degenerate) There are algebraic sets satisfying $\ND$ whose all components are degenerate. For examples, consider the following saturated ideal 
\[I=(L_1L_2L_3, L_1L_2L_4, L_1L_4L_5, L_4L_5L_6, L_1L_2L_7),\]
where $L_i$ is a generic linear form for each $i=1,\ldots, 7$ in $R=k[x_0,x_1,x_2,x_3]$. Then the algebraic set $X\subset \P^3$ defined by the ideal $I$ is the union of $5$ lines and one point such that its minimal free resolution is given by 
\[{\tiny\texttt{
 \begin{tabular}{c|ccccccccccccccccccccccccccccc}
           &0 & 1 & 2 & 3 \\[1ex]
        \hline\\
        0 & 1 & 0 &  0 & 0  \\[1ex]
        1 & 0 & 0 &  0 & 0  \\[1ex]
        2 & 0 & 5 &  5 & 1  \\[1ex]
  \end{tabular}
}}~. \]
Note that all the components of $X$ are degenerate in $\P^3$. But, a general hyperplane section $X\cap H$ is a set of $5$-points in $\P^2$ and it is nondegenerate. Hence, $X$ satisfies $\ND$ property.
\item[(c)] (Non-reduced case) A closed subscheme satisfying $\ND$ is not necessarily reduced. A simple example is a non-reduced scheme $X$ in $\P^3$ defined by the following saturated monomial ideal
$$I_X=(x_0^3, x_0^2x_1, x_0x_1^2, x_1^3, x_0^2x_2)\subset R=k[x_0,x_1,x_2,x_3],$$
then $X$ is an one-dimensional nondegenerate closed subscheme in $\P^3$ (i.e. $\codim(X,\P^3)=2)$. Since $I_X$ is a Borel fixed monomial ideal, we can verify $\Gin(I_{X\cap H/H})=(x_0^2, x_0x_1^2, x_1^3)$, which has no linear form. So does $I_{X\cap H/H}$ (i.e. $X\cap H$ is nondegenerate in $H$). Hence $X$ satisfies $\ND$.
\end{itemize}
\end{Ex}

We also give some examples of subschemes without $\ND$ property (see Example \ref{non-ND_ex}).

{\bf Main results\quad} Now, we present our main results on Betti numbers of $\ND$ subschemes:
\begin{Thm}[Theorem \ref{thm:20140212-1} and \ref{main}]\label{main_thm}
Let $X$ be any closed subscheme of codimension $e$ in $\P^n$ satisfying $\ND$ property and let $I_{X}$ be the (saturated) defining ideal of $X$. Then, we have
\begin{itemize}
\item[(a)] ($K_{p,1}$-theorem for subschemes with $\ND$ property) 
$$\beta_{i,1}(R/I_X)=0~\trm{for any $i>e$.}$$
\item[(b)] (Formulae for tailing Betti numbers) Suppose that $X$ is $3$-regular. For each $i\geq e$,
\[\beta_{i,2}(R/I_X)=\sum_{\alpha=i}^{n} \binom{\alpha}{i}\,\dim H^1(\mathcal I_{X\cap \Lambda^{\alpha}/ \Lambda^{\alpha}}(1))~,\]
where $\Lambda^{\alpha}$ is a general linear space of dimension $\alpha$.
\end{itemize}
\end{Thm}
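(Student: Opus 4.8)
The plan is to induct on the dimension $\dim X = n-e$, comparing $X$ with a general hyperplane section $\bar X = X\cap H$, and to prove (a) first because it is exactly what controls a connecting map in the proof of (b). Fix a general linear form $\ell$ cutting out $H$, set $\bar R = R/(\ell)=k[x_0,\ldots,x_{n-1}]$, let $\bar I = (I_X+(\ell))/(\ell)\subseteq \bar R$ be the (possibly non-saturated) image ideal, and let $I_{\bar X}$ be its saturation, i.e. the defining ideal of $\bar X$ in $H\cong\P^{n-1}$. Since $I_X$ is saturated, $\depth(R/I_X)\ge 1$, so a general $\ell$ is a nonzerodivisor on $R/I_X$; tensoring a minimal free resolution of $R/I_X$ with $\bar R$ then stays exact and minimal, giving the basic reduction $\beta_{i,j}(R/I_X)=\beta_{i,j}(\bar R/\bar I)$ for all $i,j$. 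This transports every question about $X$ to one about $\bar R/\bar I$ in one fewer variable, where $\bar X$ is again $\ND$ (inherited) and $3$-regular (regularity does not increase under a general section).

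Next I would pin down the finite-length module $Q:=I_{\bar X}/\bar I$. From the sheaf sequence $0\to \mathcal I_{X}(m-1)\xrightarrow{\ell}\mathcal I_X(m)\to \mathcal I_{\bar X/H}(m)\to 0$ (exact for general $\ell$) one reads off $Q_m\hookrightarrow \ker\big(H^1(\mathcal I_X(m-1))\xrightarrow{\ell}H^1(\mathcal I_X(m))\big)$. Nondegeneracy of $\bar X$ (from the $\ND$ property) kills $Q_1$, while $3$-regularity forces $H^1(\mathcal I_X(m-1))=0$ for $m\ge 3$ and $H^1(\mathcal I_X(2))=0$; hence $Q$ is concentrated in degree $2$ with $Q_2\cong H^1(\mathcal I_X(1))$ and trivial module structure, i.e. $Q\cong k(-2)^{\oplus d_n}$, where I write $d_\alpha:=\dim H^1(\mathcal I_{X\cap\Lambda^\alpha/\Lambda^\alpha}(1))$. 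Its minimal resolution is the shifted Koszul complex on the $n$ variables of $\bar R$, so $\dim \Tor^{\bar R}_i(Q,k)_{i+2}=\binom{n}{i}d_n$.

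I would then feed $0\to Q\to \bar R/\bar I\to \bar R/I_{\bar X}\to 0$ into the long exact sequence of $\Tor^{\bar R}(-,k)$, reading it in internal degree $i+1$ for (a) and $i+2$ for (b). For (a): in degree $i+1$ the surviving terms give $\Tor_i(\bar R/\bar I)_{i+1}\hookrightarrow \Tor_i(\bar R/I_{\bar X})_{i+1}$, so $\beta_{i,1}(\bar R/\bar I)\le \beta_{i,1}(\bar R/I_{\bar X})$, which vanishes for $i>e$ by the inductive hypothesis (base case $\dim X=0$: $\depth\ge1$ forces $\pd\le n=e$, so $\Tor_i=0$ for $i>e$). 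For (b): in degree $i+2$ the sequence reads $\Tor_{i+1}(\bar R/I_{\bar X})_{i+2}\to \Tor_i(Q)_{i+2}\to \Tor_i(\bar R/\bar I)_{i+2}\twoheadrightarrow \Tor_i(\bar R/I_{\bar X})_{i+2}\to 0$, whence $\beta_{i,2}(\bar R/\bar I)=\beta_{i,2}(\bar R/I_{\bar X})+\binom{n}{i}d_n-\rho_i$ with $\rho_i\le \dim\Tor_{i+1}(\bar R/I_{\bar X})_{i+2}=\beta_{i+1,1}(\bar R/I_{\bar X})$. Combining with $\beta_{i,2}(R/I_X)=\beta_{i,2}(\bar R/\bar I)$ and the inductive formula $\beta_{i,2}(\bar R/I_{\bar X})=\sum_{\alpha=i}^{n-1}\binom{\alpha}{i}d_\alpha$ (general sections of $\bar X$ being general sections of $X$, so the $d_\alpha$ agree) yields the claimed identity, provided $\rho_i=0$.

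The main obstacle is exactly this vanishing of the bottom-row connecting contribution $\rho_i$. The decisive observation is that $\rho_i$ is bounded by the linear-strand number $\beta_{i+1,1}(\bar R/I_{\bar X})$, which by part (a) applied to the $\ND$ scheme $\bar X$ vanishes for $i+1>e$, i.e. precisely in the asserted range $i\ge e$; this is why (a) must be proved first and why the hypothesis enters through the section rather than through $X$. What remains is routine but must be carried out with care: confirming that a general $\ell$ keeps the sheaf sequence exact and preserves $\ND$ and $3$-regularity of $\bar X$, verifying the $0$-dimensional base case $\beta_{e,2}(R/I_X)=\dim H^1(\mathcal I_X(1))=\deg X-e-1$ by a direct Hilbert-function computation, and tracking the internal-degree bookkeeping throughout. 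Equivalently, the entire argument can be phrased on $J=\Gin_{\mathrm{rlex}}(I_X)$: the reduction $\beta_{i,j}(R/I_X)=\beta_{i,j}(\bar R/\bar I)$ becomes Green's restriction theorem, the module $Q$ corresponds to the new quadric generators produced by the saturation recursion $J_\alpha=J_{\alpha+1}^{\mathrm{sat}_{x_\alpha}}$, and the Betti counts drop out of the Eliahou--Kervaire formula $\beta_{i,2}(R/J)=\sum_{\deg u=3}\binom{m(u)}{i-1}$, where $m(u)$ is the largest index of a variable dividing $u$.
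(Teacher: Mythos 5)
Your argument is correct, but it takes a genuinely different route from the paper's. You work directly with the saturation-defect module $Q=I_{X\cap H/H}/\bar I$: the $\ND$ property and $3$-regularity identify it as $k(-2)^{\oplus h^1(\mathcal I_{X/\P^n}(1))}$, its Koszul resolution over $\bar R$ produces the binomial coefficients $\binom{n}{i}$, and part (a) enters precisely to kill the connecting map $\Tor_{i+1}(\bar R/I_{X\cap H/H},k)_{i+2}\to\Tor_i(Q,k)_{i+2}$. The paper never touches $Q$; it transfers everything to the reverse lexicographic generic initial ideal. There, part (a) is proved by the cancellation principle plus the Eliahou--Kervaire formula plus Borel-fixedness (Theorem \ref{thm:20140212-1}); the role of your step ``$\rho_i=0$'' is played by Corollary \ref{coro:20140219}, which uses (a) and $3$-regularity to force $\beta_{i,2}(R/I_X)=\beta_{i,2}(R/\Gin(I_X))$ for $i\ge e$; and the role of your module $Q$ with its Koszul resolution is played by the Borel-fixed generator analysis of Proposition \ref{prop:20140211} (each monomial $T_k x_{n-1}$ counted by $h^1(\mathcal I_{X/\P^n}(1))$ spawns generators $T_kx_i$ for all $e-1\le i\le n-1$), combined with Eliahou--Kervaire and the identity $\sum_{k=i-1}^{n-1}\binom{k}{i-1}=\binom{n}{i}$. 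Both proofs induct on dimension through a general hyperplane section and both feed in $h^1(\mathcal I_{X/\P^n}(1))$ as the per-step increment, so the skeletons match; but the mechanisms are disjoint. Your approach buys a shorter, Gin-free argument in which the binomial coefficients are conceptually inevitable; the paper's approach buys extra combinatorial output of independent interest --- the exact degree-$2$ and degree-$3$ generator structure of $\Gin(I_X)$ and the equality of tailing Betti numbers of $I_X$ and $\Gin(I_X)$ --- which it reuses elsewhere (Proposition \ref{lem:20140113-1}, Remark \ref{remk:20140219}).

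Two minor repairs to your write-up. First, the dimension-zero base case of (b) is not literally a ``Hilbert-function computation'': you need the Koszul identification $\beta_{n,2}(R/I_X)=\dim\mathrm{socle}(A/\ell A)_2$ for the Artinian reduction $A/\ell A$ of $A=R/I_X$, together with $(A/\ell A)_3=0$ (forced by $3$-regularity), after which the Hilbert function gives $\dim (A/\ell A)_2=\deg X-n-1=h^1(\mathcal I_{X/\P^n}(1))$; the paper gets this case from Proposition \ref{lem:20140113-1}(b) instead. Second, your closing dictionary entry equating the reduction $\beta_{i,j}(R/I_X)=\beta_{i,j}(\bar R/\bar I)$ with ``Green's restriction theorem'' is off: that reduction is plain base change along a nonzerodivisor and has nothing to do with Gins. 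Neither point affects the validity of the proof.
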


In particular, the tailing Betti numbers of 3-regular scheme $X$ with $\ND$ property depend only on the \ti{linear normality} (or \ti{1-normality}) of general linear sections $X\cap \Lambda$ with $\dim\Lambda\ge e$. 

Here is an immediate corollary of Theorem \ref{main_thm} as follows:

\begin{Coro}\label{main_cor}
Let $X$ be any closed subscheme of codimension $e$ in $\P^n$ satisfying $\ND$ property and let $I_{X}$ be the (saturated) defining ideal of $X$. Suppose that $X$ is $3$-regular. Let $\mathbf{b}$ and $\mathbf{h}$ be two row vectors consisting of consecutive tailing Betti numbers and 1-normalities of general linear sections of $X$ as below:
\begin{align*}
&\mathbf{b}:=[\beta_{e,2}(R/I_X),\beta_{e+1,2}(R/I_X),\cdots,\beta_{n-1,2}(R/I_X),\beta_{n,2}(R/I_X)]\\
&\mathbf{h}:=[h^1(\mathcal{I}_{X\cap \Lambda^{e}/\Lambda^{e}}(1)),h^1(\mathcal{I}_{X\cap \Lambda^{e+1}/\Lambda^{e+1}}(1)),\cdots,h^1(\mathcal{I}_{X\cap \Lambda^{n-1}/\Lambda^{n-1}}(1)),h^1(\mathcal{I}_{X/\P^n}(1))]~.
\end{align*}
Then, we have identities
\begin{equation}\label{formula_h}
\trm{$\mathbf{b}^{T}=\Xi(n,e)\cdot\mathbf{h}^{T}$ and $\mathbf{h}^{T}=\Xi(n,e)^{-1}\cdot\mathbf{b}^{T}$~,}
\end{equation}
where $\mathbf{b}^{T}$ (resp. $\mathbf{h}^{T}$) is the transpose of $\mathbf{b}^{T}$ (resp. of $\mathbf{h}^{T}$) and $\Xi(n,e)$ is the invertible $(n-e+1)\times(n-e+1)$-matrix such as
{\small\begin{align}\label{matXi_0}
\Xi(n,e)=&\left[\begin{array}{*6{c}}
\displaystyle{e\choose e}&\displaystyle{e+1\choose e}&\displaystyle{e+2\choose e}&\cdots&\displaystyle{n-1\choose e}&\displaystyle{n\choose e}\\
0&\displaystyle{e+1\choose e+1}&\displaystyle{e+2\choose e+1}&\cdots&\displaystyle{n-1\choose e+1}&\displaystyle{n\choose e+1}\\
0&0&\displaystyle{e+2\choose e+2}&\cdots&\displaystyle{n-1\choose e+2}&\displaystyle{n\choose e+2}\\
\vdots&\vdots&\vdots&\ddots&\vdots&\vdots\\
0&0&0&\cdots&\displaystyle{n-1\choose n-1}&\displaystyle{n\choose n-1}\\
0&0&0&\cdots&0&\displaystyle{n\choose n}
\end{array}\right]~.
\end{align}}
In other words, 1-normalities of general linear sections $X\cap \Lambda$ with $\dim\Lambda\ge e$ uniquely determine the tailing Betti numbers and vice versa. 
\end{Coro}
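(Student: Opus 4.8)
The plan is to treat this Corollary as a purely formal consequence of the formula in Theorem~\ref{main_thm}(b), which I may assume. First I would set $h_\alpha := \dim H^1(\mathcal{I}_{X\cap\Lambda^\alpha/\Lambda^\alpha}(1))$ for each $\alpha$ with $e\le\alpha\le n$, noting that when $\alpha=n$ a general linear space $\Lambda^n$ of dimension $n$ in $\P^n$ is all of $\P^n$, so $X\cap\Lambda^n=X$ and $h_n=h^1(\mathcal{I}_{X/\P^n}(1))$; this reconciles the final entry of $\mathbf{h}$ as written with the uniform notation. With this convention, $\mathbf{h}=[h_e,h_{e+1},\dots,h_n]$ and $\mathbf{b}=[\beta_{e,2},\beta_{e+1,2},\dots,\beta_{n,2}]$, where I abbreviate $\beta_{i,2}$ for $\beta_{i,2}(R/I_X)$.

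Next I would read Theorem~\ref{main_thm}(b) as a linear system. For each $i$ with $e\le i\le n$ it reads $\beta_{i,2}=\sum_{\alpha=i}^{n}\binom{\alpha}{i}h_\alpha$; extending the summation so that $\alpha$ runs over the full range from $e$ to $n$ costs nothing, since $\binom{\alpha}{i}=0$ whenever $\alpha<i$. Hence $\beta_{i,2}=\sum_{\alpha=e}^{n}\binom{\alpha}{i}h_\alpha$, which in matrix form is $\mathbf{b}^{T}=M\cdot\mathbf{h}^{T}$, where $M$ is the $(n-e+1)\times(n-e+1)$ matrix whose entry in the row indexed by $i$ and the column indexed by $\alpha$ (both running from $e$ to $n$) equals $\binom{\alpha}{i}$. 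Comparing entry by entry with \eqref{matXi_0} shows $M=\Xi(n,e)$: the displayed array is upper triangular because $\binom{\alpha}{i}=0$ for $\alpha<i$, its diagonal entries are $\binom{\alpha}{\alpha}=1$, and its entries above the diagonal are exactly the binomial coefficients $\binom{\alpha}{i}$ with $\alpha>i$ that are listed. This establishes the first identity $\mathbf{b}^{T}=\Xi(n,e)\cdot\mathbf{h}^{T}$.

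Finally, invertibility is automatic: $\Xi(n,e)$ is upper triangular with every diagonal entry equal to $1$, so $\det\Xi(n,e)=1$ and its inverse exists over $\mathbb{Z}$, in particular over $k$. Multiplying the first identity on the left by $\Xi(n,e)^{-1}$ yields $\mathbf{h}^{T}=\Xi(n,e)^{-1}\cdot\mathbf{b}^{T}$, the second identity, and together the two give the claimed two-way determination. I do not expect any genuine obstacle here: all the substantive geometry is packaged inside Theorem~\ref{main_thm}(b), and what remains is the bookkeeping of recognizing the coefficient array of that formula as the stated triangular matrix, together with the trivial remark that a unit-diagonal triangular matrix is invertible. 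The only points that call for a moment of care are the alignment of the indices $i$ and $\alpha$ and the identification $\Lambda^{n}=\P^n$ that pins down the last entry of $\mathbf{h}$.
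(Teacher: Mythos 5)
Your proposal is correct and follows essentially the same route as the paper: the paper likewise treats this as an immediate restatement of Theorem~\ref{main_thm}(b) in matrix form, observing that $\Xi(n,e)$ is unit-upper-triangular (so $\det\Xi(n,e)=1$) and hence invertible, which yields the second identity. The only cosmetic difference is that the paper goes on to display the inverse matrix $\Xi(n,e)^{-1}$ explicitly with alternating-sign binomial entries, which your argument does not need for the statement itself.
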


Using these tailing Betti numbers $\beta_{e,2}(R/I_X),\cdots,\beta_{n,2}(R/I_X)$, we can determine $P_X(t)$, Hilbert polynomial of $X\subset\P^n$ as follows:

\begin{Coro}[Corollary \ref{app_hilb}]
Let $X$ be any $3$-regular $r$-dimensional connected algebraic set in $\P^n$ satisfying $\ND$ property. Set $e=n-r$, the codimension. Then, $P_X(t)$, Hilbert polynomial of $X\subset\P^n$ can be computed as
\begin{align*}
P_X(t)&=\ds\left\{e+1+\sum^n_{i=e}(-1)^{i-e}{i\choose e}\beta_{i,2}(R/I_X)\right\}{t+r-1\choose r}\\
&+\sum^{r-1}_{i=0}\left[\left\{1-\beta_{n-i-1,2}(R/I_X)+\sum^n_{j=n-i}(-1)^{j-n+i}{j+1\choose n-i}\beta_{j,2}(R/I_X)\right\}{t+i-1\choose i}\right]~.
\end{align*}
In particular, the degree of $X\subset\P^n$ can be given by
\begin{align*}
e+1+\sum^n_{i=e}(-1)^{i-e}{i\choose e}\beta_{i,2}(R/I_X)
\end{align*}
and the arithmetic genus $p_a(X)$ in case of $X$ being a curve and the irregularity $q(X)$ in case of $X$ being a surface can be given by $\beta_{n-1,2}(R/I_X)-{n+1\choose n}\beta_{n,2}(R/I_X)$.\end{Coro}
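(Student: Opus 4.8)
The plan is to compute $P_X(t)$ first through the $1$-normalities $h^1(\mathcal I_{X\cap\Lambda^{\alpha}/\Lambda^{\alpha}}(1))$ of general linear sections, and only then to convert the answer into tailing Betti numbers by the binomial inversion already recorded in Corollary \ref{main_cor}. Throughout I set $X_{\alpha}:=X\cap\Lambda^{\alpha}\subset\Lambda^{\alpha}\cong\P^{\alpha}$ and $h_{\alpha}:=h^1(\mathcal I_{X_{\alpha}/\Lambda^{\alpha}}(1))$ for $e\le\alpha\le n$, so that $X_n=X$. I first record that each $X_{\alpha}$ stays inside the relevant category: it is again $3$-regular (Castelnuovo--Mumford regularity does not grow under a general hyperplane section) and still enjoys $\ND$ property, hence is nondegenerate in $\Lambda^{\alpha}$, by the remark following Definition \ref{def_ND1}.

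The geometric heart of the argument is a clean evaluation of these Hilbert polynomials at $t=1$. Since $X_{\alpha}$ is $3$-regular, $\mathcal O_{X_{\alpha}}$ is $2$-regular, so $H^{i}(\mathcal O_{X_{\alpha}}(1))=0$ for every $i\ge1$ and therefore $P_{X_{\alpha}}(1)=\chi(\mathcal O_{X_{\alpha}}(1))=h^{0}(\mathcal O_{X_{\alpha}}(1))$. Plugging the structure sequence $0\to\mathcal I_{X_{\alpha}/\Lambda^{\alpha}}(1)\to\mathcal O_{\Lambda^{\alpha}}(1)\to\mathcal O_{X_{\alpha}}(1)\to0$ into cohomology and using nondegeneracy $H^{0}(\mathcal I_{X_{\alpha}/\Lambda^{\alpha}}(1))=0$ then yields
\[
P_{X_{\alpha}}(1)=(\alpha+1)+h_{\alpha},\qquad e\le\alpha\le n.
\]
For $\alpha>e$ the scheme $X_{\alpha}$ has positive dimension, so a general linear form is a non-zerodivisor on its (saturated) homogeneous coordinate ring and the Hilbert polynomial of a general hyperplane section is the first difference $P_{X_{\alpha-1}}(t)=P_{X_{\alpha}}(t)-P_{X_{\alpha}}(t-1)$. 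Writing $\nabla$ for this first-difference operator, iteration gives $P_{X_{n-j}}=\nabla^{j}P_X$ and hence $(\nabla^{j}P_X)(1)=(n-j+1)+h_{n-j}$ for $0\le j\le r$.

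To read off $P_X$ from these difference values I would expand it in the binomial basis $Q_{i}(t):=\binom{t+i-1}{i}$, which is tailor-made here because $\nabla Q_{i}=Q_{i-1}$ and $Q_{i}(1)=1$ for all $i\ge0$. Writing $P_X=\sum_{i=0}^{r}c_{i}Q_{i}$ gives $(\nabla^{j}P_X)(1)=\sum_{i\ge j}c_{i}$, so that $c_{i}=(\nabla^{i}P_X)(1)-(\nabla^{i+1}P_X)(1)$; this produces $c_{r}=(e+1)+h_{e}$ together with $c_{i}=1+h_{n-i}-h_{n-i-1}$ for $0\le i\le r-1$, i.e. $P_X$ expressed through the $1$-normalities. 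Substituting the inversion $h_{\alpha}=\sum_{j=\alpha}^{n}(-1)^{j-\alpha}\binom{j}{\alpha}\beta_{j,2}(R/I_X)$ of Theorem \ref{main_thm}(b), which is precisely $\mathbf h^{T}=\Xi(n,e)^{-1}\mathbf b^{T}$ of Corollary \ref{main_cor}, and collapsing $h_{n-i}-h_{n-i-1}$ by Pascal's rule $\binom{j}{\alpha}+\binom{j}{\alpha-1}=\binom{j+1}{\alpha}$, turns $c_r$ and the $c_i$ into exactly the displayed coefficients and gives the degree $c_r=(e+1)+\sum_{i=e}^{n}(-1)^{i-e}\binom{i}{e}\beta_{i,2}(R/I_X)$ at once. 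For the closing assertions, $Q_i(0)=0$ for $i\ge1$ forces $\chi(\mathcal O_X)=P_X(0)=c_0$; when $X$ is connected one has $h^{0}(\mathcal O_X)=1$, and $2$-regularity of $\mathcal O_X$ kills $H^{2}(\mathcal O_X)$ in the surface case, so $p_a(X)=1-c_0$ (curve) and $q(X)=1-c_0$ (surface) both equal $h_{n-1}-h_n=\beta_{n-1,2}(R/I_X)-\binom{n+1}{n}\beta_{n,2}(R/I_X)$.

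I expect the only real obstacle to sit in the second paragraph: justifying all the cohomology vanishings behind $P_{X_{\alpha}}(1)=h^{0}(\mathcal O_{X_{\alpha}}(1))$ and making the hyperplane-section recursion legitimate, i.e. checking that $3$-regularity and $\ND$ property genuinely persist for the successive general sections and that a general linear form stays a non-zerodivisor so that the first-difference identity holds at the level of Hilbert polynomials. Once the values $(\nabla^{j}P_X)(1)$ are secured, everything downstream is a formal manipulation with the basis $Q_i$ and the binomial inversion of Corollary \ref{main_cor}.
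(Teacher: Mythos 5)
Your proof is correct, and it differs from the paper's at the key computational step, in a way worth noting. Both arguments share the same skeleton: expand $P_X(t)$ in the binomial basis $\binom{t+i-1}{i}$, relate the coefficients to general linear sections through the first-difference identity $P_{X\cap H}(t)=P_X(t)-P_X(t-1)$, and convert $1$-normalities into tailing Betti numbers via the sectional formula (\ref{h_formula}) and Pascal's rule. The divergence is in how the coefficients are evaluated. The paper computes $\chi_i(X)=\chi_0(X\cap\Lambda^{n-i})=P_{X\cap\Lambda^{n-i}}(0)=\chi(\mathcal O_{X\cap\Lambda^{n-i}})$, and to evaluate this it needs Bertini's theorem: the general sections of a connected algebraic set are connected and reduced, giving $h^0(\mathcal O_{X\cap\Lambda^{n-i}})=1$, after which the long exact sequence identifies $h^1(\mathcal O_{X\cap\Lambda^{n-i}})=h^2(\mathcal I_{X\cap\Lambda^{n-i}})$ with a difference of sectional $1$-normalities; the constant $1$ in each coefficient is thus geometric in origin. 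You instead evaluate at $t=1$, where $3$-regularity kills $H^i(\mathcal O_{X\cap\Lambda^{\alpha}}(1))$ for $i\geq 1$ and nondegeneracy (from $\ND$) gives $P_{X\cap\Lambda^{\alpha}}(1)=(\alpha+1)+h^1(\mathcal I_{X\cap\Lambda^{\alpha}/\Lambda^{\alpha}}(1))$ outright, so the constant $1$ falls out of the arithmetic $(n-i+1)-(n-i)$. This buys a genuine strengthening: your derivation of the displayed formula for $P_X(t)$ uses neither connectedness nor reducedness (Bertini enters nowhere), and those hypotheses are needed only for the closing identification of $p_a$ and $q$, where you correctly invoke $h^0(\mathcal O_X)=1$ and the vanishing of $h^2(\mathcal O_X)$ in the surface case. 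What the paper's route buys in exchange is transparency: each coefficient is exhibited directly as the Euler characteristic of the structure sheaf of a section. The vanishings you flag as the remaining obstacle all hold as you expect: $H^i(\mathcal O_{X\cap\Lambda^{\alpha}}(1))\cong H^{i+1}(\mathcal I_{X\cap\Lambda^{\alpha}/\Lambda^{\alpha}}(1))=0$ for $i\geq 1$ by $3$-regularity, persistence of $3$-regularity and of $\ND$ under general sections is standard (the latter is remarked after Definition \ref{def_ND1}), and a general linear form misses the finitely many associated primes of the saturated ideal, so the first-difference identity is legitimate at the level of Hilbert polynomials.
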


For this purpose, in section \ref{sec_gins} we briefly review Generic initial ideals (Gins) theory and develop some combinatorial methods to study tailing Betti numbers using Gins with \ti{reverse lexicographic order (rlex)}. In section \ref{sect_str}, we first regard some properties of $\ND$ subschemes, next consider results which connect these Betti numbers with those of rlex Gins, and in the end we prove an important proposition which describes the generation structure of the rlex Gins of 3-regular subschemes virtually. Finally, in section \ref{sect_main}, we give a proof on our main result and consider some applications and examples (see Example \ref{final_ex}). Note that, in case of arithmectically Cohen-Macaulay (ACM) or arithmectically Gorenstein (AG), there exists only one tailing Betti number $\beta_{e,2}(R/I_X)$ and the meaning is well-known. Since we consider a general situation which is not necessarily ACM nor AG, our approach uses combinatorial rlex Gins techniques instead of using duality theorems and our results can cover more general cases.

{\bf Acknowledgement} Both authors are grateful to Professor Sijong Kwak for his supports. He invited both of us to Algebra Structures and its Applications Research Center (ASARC) in Korea Advanced Institute of Science and Technology (KAIST) and gave a nice chance to start our collaborations.

\section{Generic initial ideals : A brief review}\label{sec_gins}

{\bf Convention} We are working on the following convention:
\begin{itemize}
\item ($d$-normality) \ti{Linear normality} of $X\subset\P^n$ means $\dim_k H^1(\mathcal{I}_{X/\P^n}(1))$ which is cokernel of the canonical map $H^0(\mathcal{O}_{\P^n}(1))\to H^0(\mathcal{O}_X(1))$ and measures incompleteness of the linear system giving the embedding. Generally, for any $d\ge1$ we call $\dim_k H^1(\mathcal{I}_{X/\P^n}(d))$ $d$-normality of $X\subset\P^n$ and $X$ is said to be \ti{$d$-normal} if $H^1(\mathcal{I}_{X/\P^n}(d))=0$.
\item For a coherent sheaf $\mathcal{F}$ on a projective scheme $X$ over $k$, $h^i(X,\mathcal{F})$ means $\dim_k H^i(X,\mathcal{F})$.
\item (Betti numbers) For a graded $R$-module $M$, we define \textit{graded Betti numbers $\beta^R_{i,j}(M)$ of $M$} by $\dim_k\Tor_i^{R}(M,k)_{i+j}$. We denote it as $\beta_{i,j}(M)$ or $\beta_{i,j}$ if it is obvious. For a homogeneous ideal $I \subset R$, note that $\Tor^R_i(R/I,k)_{i+j}=\Tor^R_{i-1}(I,k)_{i-1+j+1}$. So $\beta^R_{i,j}(R/I)=\beta^R_{i-1,j+1}(I)$.
\item (Dimension and Degree) When we call the \textit{dimension of a closed subscheme $X\subset\P^n$}, denoted by $\dim X$, it means the degree of the Hilbert polynomial of $R/I_X$. The \ti{codimension of $X$} is $n-\dim X$. We also define the \ti{degree of $X$}, denoted by $\deg(X)$, as $(\dim X)!$ times the leading coefficient of this Hilbert polynomial.
\item (Arithmetic depth) When we refer the \textit{depth of $X\subset\P^n$}, denoted by $\depth_R(X)$ or simply $\depth(X)$, we mean the arithmetic depth of $X$, i.e. $\depth_{R}(R/I_X)$.
\item (Generic initial ideal) Given a homogeneous ideal $I \subset R$ and a term order $\tau$, there is a Zariski open subset
$U \subset GL_{r+1}(k)$ such that $\ini_{\tau}(g(I))$ for $g \in U$ is constant. We will call this constant $\ini_{\tau}(g(I))$
 the \ti{generic initial ideal of $I$} and denote it by $\Gin_{\tau}(I)$.  
\item (Borel fixed property) The generic initial ideal $\Gin_{\tau}(I)$ of $I$ has \ti{Borel fixed property}, which is a nice combinatorial property. In characteristic 0, we say that a monomial ideal $J$ has Borel-fixed property if $x_im \in J$ for a monomial $m$, then $x_jm \in J$ for all $j \leq i$.
\item  (Term order) Unless otherwise stated, we always assume the generic initial ideal with respect to {\it the reverse lexicographic order} (e.g. see \cite{G}) and denote it simply by $\Gin(I)$.
\item (Saturation and Quotient) For a Borel fixed monomial ideal $J\subset R$, we will simply denote $\bigcup_{k=0}^{\infty}(J:x_n^k)$ and $\ds (J,x_n)/(x_n)$ by $J|_{x_n\to 1}$ and $J|_{x_n\to 0}$ respectively.
\item If $K=(k_0,\ldots,k_n)$ then we denote by ${\bf x}^K$ the monomial 
${\bf x}^K = x_0^{k_0}\cdots x_n^{k_n}$, and denote by $|K|$ its degree $|K|=\sum_{j=0}^{n}k_j$. For monomial ${\bf x}^K$, we defines $\max({\bf x}^K)=\max\{j : k_j>0\}~.$
\item (Monomial generators set) Let $J$ be any monomial ideal of $R$. Then, we have a set of minimal \ti{monomial} generators of $J$. We write this set as $\mathcal G(J)$. For each $d\ge 0$, we also write $\mathcal G(J)_d$ for the subset of minimal monomial generators of degree $d$.
\item For a Borel fixed monomial ideal $J$, we denote the subset $\{ {\bf x}^K \in \mathcal G(J)_d \mid \max({\bf x}^K)=i \}$ of $\mathcal G(J)_d$ by $\mathcal{M}_i(d,J)$. Thus, it holds that $\mathcal G(J)_d=\bigcup_{i=0}^{n}\mathcal{M}_i(d,J)$.
\end{itemize}

In general, a great deal of fundamental information about a homogeneous ideal $I$ can be obtained from $\Gin(I)$. Now we recall some known facts concerning generic initial ideals from \cite{BS,G}, which will be used throughout the remaining parts of the paper:

\begin{Thm}{\cite{BS,G}}\label{BS}
Let $I$ be a homogeneous ideal and $L$ be a general linear form in $R=k[x_0,\ldots, x_n]$. Consider the ideal $\bar{I}=(I,L)/(L)$ as a homogeneous ideal of polynomial ring 
$S=k[x_0,\ldots,x_{n-1}]$. Then we have
\begin{itemize}
 \item[(a)] $\Gin(\bar{I})=\frac{(\Gin(I),x_n)}{(x_n)}=\Gin(I)|_{\,x_n\rightarrow0}$;
 \item[(b)] $\Gin(\bar{I}^{\sat})=\bigcup_{k=0}^{\infty}(\Gin(\bar{I}):x_{n-1}^k)=(\Gin(I)|_{\,x_n\rightarrow0})|_{x_{n\!-\!1}\rightarrow
1}$;
\item[(c)] $\sat(I)=\sat(\Gin(I))=$ the maximal degree of generators involving $x_n$;
\item[(d)] $\reg(I)=\reg(\Gin(I))=$ the maximal degree of generators of $\Gin(I)$.
\end{itemize}

\end{Thm}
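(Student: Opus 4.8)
The plan is to prove the four statements as the standard package of reverse lexicographic $\Gin$ facts, establishing (a) first and bootstrapping (b)--(d) from it together with the combinatorics of Borel fixed ideals. First I would collapse the role of the general linear form $L$ onto the single variable $x_n$. Since $\Gin(I)=\ini_{\tau}(gI)$ for $g$ ranging over a dense open $U\subseteq GL_{n+1}(k)$, and since $L$ is required only to be general, I can synchronize the two genericity conditions: for generic $g$ the form $g^{-1}(x_n)$ is itself a general linear form, so after a change of coordinates I may assume simultaneously that $\ini_{\tau}(I)=\Gin(I)$ and that $L=x_n$. Then $\bar I=(I,x_n)/(x_n)\subseteq S=k[x_0,\dots,x_{n-1}]$, and (a) becomes the assertion that forming the reverse lexicographic initial ideal commutes with killing the smallest variable.

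The core step is (a). Here the reverse lexicographic order enters through the elementary observation that for a homogeneous $f$ with $x_n\nmid\ini(f)$ one has $\ini(f|_{x_n\to0})=\ini(f)$ (the leading term simply survives the specialization), together with the genuinely order-specific fact that, in generic coordinates, $\ini(I+(x_n))=(\ini(I),x_n)$ --- valid precisely because $x_n$ is the \emph{smallest} variable, so no leading term is ever forced to acquire an $x_n$-factor. Starting from a reduced Gr\"obner basis $G$ of $I$ in $\Gin$-position, I would show that $\{\,\bar g : g\in G,\ x_n\nmid\ini(g)\,\}$ is a Gr\"obner basis of $\bar I$, whence $\ini(\bar I)$ is generated exactly by the minimal generators of $\Gin(I)$ free of $x_n$; this monomial ideal is precisely $(\Gin(I),x_n)/(x_n)=\Gin(I)|_{x_n\to0}$. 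The genericity absorbed in the first step is what rules out any rank drop under the specialization --- equivalently, it forces the general hyperplane section of $R/I$ and the coordinate section $R/(\Gin(I)+(x_n))$ to share the same Hilbert function --- upgrading the evident inclusion $\ini(\bar I)\supseteq\Gin(I)|_{x_n\to0}$ to an equality. Establishing this no-rank-drop property is the main obstacle; it is where the Borel fixed structure of $\Gin(I)$ and the smallest-variable role of $x_n$ do the real work.

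Granting (a), part (b) follows by iterating the same principle for saturation: for the reverse lexicographic order one has $\Gin(J^{\sat})=\Gin(J):x_n^{\infty}=\Gin(J)|_{x_n\to1}$, because a Borel fixed ideal is saturated with respect to the irrelevant maximal ideal exactly when it is saturated with respect to its last variable. Restricting to the hyperplane sends $x_n$ to $0$ and promotes $x_{n-1}$ to the role of the last working variable, so saturating $\bar I$ in $S$ amounts to inverting $x_{n-1}$; composing with (a) gives $\Gin(\bar I^{\sat})=(\Gin(I)|_{x_n\to0})|_{x_{n-1}\to1}=\bigcup_k(\Gin(\bar I):x_{n-1}^k)$. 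For (c) I would combine two facts: first, since taking the initial ideal preserves the Hilbert function in every degree and (by the saturation-commutation just used) $\Gin(I^{\sat})=\Gin(I)^{\sat}$, the saturation index is read off identically from $I$ and from $\Gin(I)$; second, for a Borel fixed ideal $J$ the passage to $J:x_n^{\infty}$ alters $J$ exactly up through the top degree of a minimal generator divisible by $x_n$, so that $\sat(\Gin(I))$ equals that top degree. Together these yield $\sat(I)=\sat(\Gin(I))=$ the maximal degree of a generator of $\Gin(I)$ involving $x_n$.

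Finally, for (d) I would invoke the Eliahou--Kervaire description of regularity for Borel fixed ideals, $\reg(J)=\max\{\deg m : m\in\mathcal G(J)\}$, so that $\reg(\Gin(I))$ equals the top generator degree. The substantive equality $\reg(I)=\reg(\Gin(I))$ I would obtain from the Bayer--Stillman regularity criterion --- which detects $d$-regularity through the behaviour of $(I:\ell^{\infty})/I$ and of successive generic hyperplane restrictions --- by induction on $n$: parts (a), (b) and (c) are precisely the tools that transport the criterion from $I$ to the Borel fixed ideal $\Gin(I)$, with the Artinian quotient as the base case. I expect the genericity bookkeeping underlying (a) to be the one genuinely delicate point; once it is in place, (b)--(d) are formal consequences of it together with the well-understood combinatorics of strongly stable ideals.
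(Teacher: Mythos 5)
The paper does not prove this theorem at all: it is quoted as known background, with the proofs delegated to Bayer--Stillman \cite{BS} and Green \cite{G}. So there is no internal argument to compare against; measured against those sources, your proposal reconstructs the standard proof in its standard order --- (a) from the reverse lexicographic specialization property plus a Gr\"obner basis argument, (b) from the fact that a Borel fixed ideal is saturated exactly when it is saturated with respect to its last variable, (c) from Hilbert function preservation plus that same combinatorics, and (d) from the Eliahou--Kervaire resolution \cite{EK} combined with the Bayer--Stillman regularity criterion. Two points deserve comment. First, a small correction: the exchange facts $\ini(I+(x_n))=(\ini(I),x_n)$ and $\ini(I:x_n)=\ini(I):x_n$ hold for rlex in \emph{every} coordinate system, not only generic ones; genericity is needed only to identify $\ini(gI)$ with $\Gin(I)$, to take $L$ to $x_n$, and for the synchronization issue below. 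Second, the one substantive hole is precisely the point you defer twice: that a generic $g\in GL_{n+1}(k)$ not only puts $I$ in Gin position but also induces coordinates on $S$ that are generic for $\bar I$, so that $\ini_S(\overline{gI})$ really equals $\Gin(\bar I)$ and not merely some initial ideal of an ideal projectively equivalent to $\bar I$. This is the actual content of (a), and it has a short standard fix which your sketch should absorb: any $g'\in GL_n(k)$ lifts to $\tilde g'\in GL_{n+1}(k)$ fixing $x_n$, and then $g'\cdot\overline{gI}=\overline{(\tilde g' g)I}$; since $\tilde g' g$ is again generic for $g$ generic, the all-coordinates rlex fact gives $\ini_S\bigl(g'\cdot\overline{gI}\bigr)=\ini_R\bigl((\tilde g' g)I\bigr)\big|_{x_n\to 0}=\Gin(I)|_{x_n\to 0}$ for all $(g,g')$ in a dense open set, and taking $g'$ generic identifies the left side with $\Gin(\bar I)$. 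With that lemma supplied, your chain (a) $\Rightarrow$ (b) $\Rightarrow$ (c) $\Rightarrow$ (d) goes through exactly as you describe, and is the argument of \cite{BS,G}; without it, your ``no rank drop'' step is an assertion rather than a proof.
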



From the following result we can compute the minimal free resolution of Borel fixed monomial ideals. Note that generic initial ideals have Borel fixed property (see e.g. \cite{BS,Ga}).
\begin{Thm}\label{EK} (Eliahou and Kervaire) Let $J$ be a Borel fixed monimial ideal of $R=k[x_0,\ldots,x_n]$. Denote by $\mathcal G (J)$ the set of minimal (monomial) generators of $J$ and ${\mathcal G (J)}_d$ the elements of $\mathcal G (J)$ having degree $d$. Then,
\begin{equation*}
\beta_{i,d}(R/J)=\beta_{i-1,d+1}(J)=\sum_{T\in{\mathcal G (J)}_{d+1}}{\max(T) \choose i-1}~.
\end{equation*}
\end{Thm}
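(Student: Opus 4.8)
The plan is to exploit the combinatorial structure of Borel fixed ideals directly. By the Borel fixed property as defined above, $J$ is \emph{strongly stable}: if ${\bf x}^K\in J$ and $x_i\mid {\bf x}^K$ with $j\le i$, then $(x_j/x_i){\bf x}^K\in J$; in particular, for every minimal generator $u$ with $p:=\max(u)$ one has $(x_j/x_p)u\in J$ for all $j<p$. Rather than writing down Koszul syzygies by hand, I would deduce the formula from the fact that such a $J$ has \emph{linear quotients}, and then read off the graded Betti numbers by iterated mapping cones. Throughout I use the convention recorded above that $\beta_{i,d}(R/J)=\beta_{i-1,d+1}(J)$, so it suffices to compute the graded free modules in a minimal resolution of $R/J$ and match their ranks with $\sum_{T\in\mathcal G(J)_{d+1}}\binom{\max(T)}{i-1}$.

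First I would order the minimal monomial generators $\mathcal G(J)=\{u_1,\dots,u_r\}$ so that $\deg u_1\le\cdots\le\deg u_r$, breaking ties within a degree by reverse lexicographic order (chosen so that $(x_j/x_p)u_k$ precedes $u_k$ whenever $j<p=\max(u_k)$), and set $J_k=(u_1,\dots,u_k)$. The technical heart is the colon-ideal identity
\[
(J_{k-1}:u_k)=(x_0,x_1,\dots,x_{\max(u_k)-1}),\qquad k=1,\dots,r,
\]
i.e.\ the colon ideal is generated by exactly the $\max(u_k)$ variables smaller than $x_{\max(u_k)}$. The inclusion $\supseteq$ is the substantive part: for $j<p:=\max(u_k)$ the monomial $v=(x_j/x_p)u_k$ lies in $J$ by strong stability and has the same degree as $u_k$, so $v$ is divisible by some minimal generator $u_l$; since $x_j u_k=x_p\cdot v$, that $u_l$ also divides $x_j u_k$, and the ordering (degree first, then rlex) forces $l<k$ and $u_l\ne u_k$, whence $x_j\in (J_{k-1}:u_k)$. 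For the reverse inclusion one checks, using the uniqueness of the Eliahou--Kervaire decomposition $m=u(m)\cdot v(m)$ with $\max(u(m))\le\min(v(m))$, that a monomial $w$ supported only on variables of index $\ge p$ puts $w\,u_k$ into canonical form with canonical generator $u_k$, hence $w\,u_k\notin J_{k-1}$; thus the colon ideal involves no variable of index $\ge p$. This establishes that $J$ has linear quotients with the $k$-th colon ideal generated by $\max(u_k)$ variables.

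Given linear quotients, I assemble a minimal free resolution of $R/J$ by the iterated mapping cone of the short exact sequences
\[
0\to \big(R/(J_{k-1}:u_k)\big)(-\deg u_k)\to R/J_{k-1}\to R/J_k\to 0,\qquad k=1,\dots,r.
\]
Because $(J_{k-1}:u_k)=(x_0,\dots,x_{\max(u_k)-1})$ is generated by a regular sequence of $\max(u_k)$ linear forms, the left-hand module is resolved by the associated Koszul complex, whose $(i-1)$-st term has rank $\binom{\max(u_k)}{i-1}$ and internal degree $\deg(u_k)+(i-1)$. In the cone this term is placed in homological degree $i$, so the generator $u_k$ contributes exactly $\binom{\max(u_k)}{i-1}$ free summands to the $i$-th module of the resolution of $R/J$, all in internal degree $\deg(u_k)+i-1$. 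Collecting the contributions of all generators of a fixed degree $\deg(u_k)=d+1$ gives $\beta_{i,d}(R/J)=\sum_{T\in\mathcal G(J)_{d+1}}\binom{\max(T)}{i-1}$, which, together with the conversion $\beta_{i,d}(R/J)=\beta_{i-1,d+1}(J)$, is exactly the asserted formula.

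I expect the main obstacle to be twofold. The first and principal difficulty is the colon-ideal identity itself, in particular the reverse inclusion, where the uniqueness of the Eliahou--Kervaire decomposition must be used carefully to rule out divisibility of $w\,u_k$ by any earlier generator; getting the tie-break order to cooperate in both inclusions is the delicate point. The second is \emph{minimality} of the iterated cone: a priori it is only a (possibly non-minimal) resolution, so the ranks give an upper bound for the Betti numbers, and one must verify that the comparison (lifting) maps can be chosen with all entries in the irrelevant ideal $\mathfrak m=(x_0,\dots,x_n)$. This holds because the map $\big(R/(J_{k-1}:u_k)\big)(-\deg u_k)\to R/J_{k-1}$ strictly raises internal degree (it is induced by multiplication by $u_k$, of degree $\ge 1$) and the Koszul differentials are linear, so every lift lands in $\mathfrak m$ and no cancellation occurs across distinct cones; hence the constructed resolution is minimal and the ranks \emph{are} the Betti numbers. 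With these two points settled the formula follows; alternatively, the same count can be obtained by writing down the explicit Eliahou--Kervaire complex on the symbols $e(u;F)$ with $u\in\mathcal G(J)$ and $F\subseteq\{0,\dots,\max(u)-1\}$ and checking its exactness and minimality directly.
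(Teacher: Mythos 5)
Your proposal takes a genuinely different route from the paper, because the paper gives no argument at all: its ``proof'' of Theorem~\ref{EK} is a citation to Eliahou--Kervaire's article, where the formula is obtained by constructing the explicit minimal resolution on symbols $e(u;F)$, $u\in\mathcal G(J)$, $F\subseteq\{0,\dots,\max(u)-1\}$, and verifying exactness and minimality directly. What you do instead is the linear-quotients/iterated-mapping-cone argument (in the style of Herzog--Takayama): order the generators degree-first, establish $(J_{k-1}:u_k)=(x_0,\dots,x_{\max(u_k)-1})$, resolve each quotient by the Koszul complex on $\max(u_k)$ linear forms, and count ranks. This route is more modular --- the number $\binom{\max(u_k)}{i-1}$ falls out as a rank of a Koszul term without ever writing a differential --- while the explicit Eliahou--Kervaire complex additionally produces the maps of the resolution, which a pure rank count never does. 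Your minimality argument is sound and is the right one: because the ordering is degree-increasing, the $i$-th Koszul term of the $k$-th cone is generated in internal degree $\deg(u_k)+i$, strictly above the degrees $\deg(u_j)+i-1$, $j<k$, of the generators already present in homological degree $i$, so every comparison map has entries in $\mathfrak{m}$ and no cancellation can occur.

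The one step that needs more than you wrote is the reverse inclusion $(J_{k-1}:u_k)\subseteq(x_0,\dots,x_{p-1})$, $p=\max(u_k)$. Uniqueness of the Eliahou--Kervaire decomposition does not by itself rule out that $w\,u_k$, with $\min(w)\ge p$, is divisible by an \emph{earlier} generator in a non-canonical way: divisibility by $u_l$ is weaker than $u_l$ occurring as the canonical factor, so ``the canonical generator of $w\,u_k$ is $u_k$'' does not immediately give $w\,u_k\notin J_{k-1}$. Two standard repairs: (i) an exchange argument --- if $u_l\mid w\,u_k$ with $l<k$, set $q=u_l/\gcd(u_l,u_k)$ and $q'=u_k/\gcd(u_l,u_k)$; coprimality gives $q\mid w$, so every variable of $q$ has index $\ge p$ while every variable of $q'$ has index $\le p$; when $\deg u_l<\deg u_k$, strong stability lets you replace the variables of $q$ by variables of $q'$ one at a time, producing an element of $J$ that properly divides $u_k$, contradicting minimality; when $\deg u_l=\deg u_k$, the shape of $q$ versus $q'$ forces $u_k$ to precede $u_l$ in the degree-then-descending (r)lex order, contradicting $l<k$; or (ii) prove the sharper fact that for a strongly stable ideal the canonical generator $u(m)$ is the \emph{first} generator in your order dividing $m$, after which your uniqueness argument closes as stated. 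Either patch is routine, and you correctly identified this as the crux, so I regard the proposal as correct modulo this elaboration.
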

\begin{proof}
See the main result in \cite{EK}.
\end{proof}

Now, we present a useful lemma for the remaining part, which also comes from the Borel fixed propery.

\begin{Lem}\label{lem:20140127-0}
Let $J\subset R=k[x_0,\ldots, x_n]$ be a Borel fixed monomial ideal and $T\in R_{d}\setminus J_d$ be a monomial of degree $d$. Suppose that $Tx_j\in J_{d+1}$ for some $j\geq\max(T)$. Then we have
\[ Tx_i \in \mathcal G(J)_{d+1} \text{ for every } i \text{ such that }\max(T)\leq i \leq j.\] 
\end{Lem}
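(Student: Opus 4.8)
The plan is to verify the two conditions defining membership in $\mathcal G(J)_{d+1}$ separately: first that $Tx_i$ lies in $J$, and then that it is a \emph{minimal} generator, for each index $i$ with $\max(T)\le i\le j$.

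For membership I would apply the Borel fixed property directly. Since $Tx_j\in J_{d+1}$ and $i\le j$, viewing $Tx_j=x_j\cdot T$ and lowering the index $j$ to $i$ yields $x_i\cdot T=Tx_i\in J$ for every $i\le j$, in particular for $\max(T)\le i\le j$. This step is immediate and uses nothing beyond the definition of a Borel fixed ideal.

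The substantive part is minimality. Recall that a monomial $u$ of degree $d+1$ lying in $J$ is a minimal generator precisely when $u/x_k\notin J$ for every variable $x_k$ dividing $u$. So I would fix $i$ with $\max(T)\le i\le j$ and let $x_k$ be any variable dividing $Tx_i$; the goal is to show $Tx_i/x_k\notin J$. If $x_k=x_i$, then $Tx_i/x_k=T$, which is not in $J$ by hypothesis. Otherwise $x_k$ divides $T$, so $k\le\max(T)\le i$; setting $T'=T/x_k$, I would argue by contradiction: were $Tx_i/x_k=T'x_i$ in $J$, then since $k\le i$ the Borel fixed property applied to $x_i\cdot T'$ would force $x_k\cdot T'=T\in J$, contradicting $T\notin J$. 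Hence no proper divisor of $Tx_i$ lies in $J$, and $Tx_i$ is a minimal generator.

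The only delicate point — and the place where the hypothesis $i\ge\max(T)$ is essential — is this last contradiction step: it works because every variable $x_k$ dividing $T$ satisfies $k\le\max(T)\le i$, so that passing from $T'x_i$ to $T'x_k=T$ is a legitimate index-\emph{lowering} Borel move. Were $i<\max(T)$, some variable of $T$ could carry an index larger than $i$ and the Borel move would point the wrong way, so the argument genuinely depends on $\max(T)\le i$. I expect this to be the main (and essentially only) obstacle; the remainder is a routine unwinding of the definitions of Borel fixed ideal and of minimal monomial generator.
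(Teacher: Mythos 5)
Your proof is correct and follows essentially the same route as the paper's: membership via a single index-lowering Borel move from $Tx_j$, and minimality by contradiction, showing that if $Tx_i/x_k\in J$ for a variable $x_k\neq x_i$ dividing $T$, then the Borel move lowering $i$ to $k\le\max(T)\le i$ would force $T\in J$. Your explicit remark on where the hypothesis $i\ge\max(T)$ enters matches exactly the role it plays in the paper's argument, so there is nothing to add.
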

\begin{proof}
Let us write $T$ as $x_{k_1}x_{k_2}\cdots x_{k_{d}}$ where $k_1\le k_2\le\cdots\le k_{d}=\max(T)$. Since $Tx_j\in J_{d+1}$, by Borel fixed property, we see that $Tx_i \in J_{d+1}$ \text{ for each } $i$ \text{ with } $\max(T)\leq i \leq j$. Suppose that $Tx_i$ is not a minimal generator of $J$ for some $\max(T)\leq i \leq j$. Then there is a monomial $M\in J_{d}$ such that $Tx_i=M x_\ell$ for some variable $x_\ell\in R_1$. Since $T\notin J_d$, we see that $M\neq T$ (so, $x_\ell\neq x_i$). Now we could write $M=x_{k_1}x_{k_2}\cdots \widehat{x_{\ell}}\cdots x_{k_{d}}x_i$ for some $k_1\le\ell\le k_{d}$. So, using Borel fixed property again, we see that $T=x_{k_1}x_{k_2}\cdots x_{k_{d-1}}\in J_{d}$, which contradicts to $T\notin J_d$.\end{proof}

\begin{Remk}\label{lem:20140127-1}
Lemma \ref{lem:20140127-0} can be also used to show a variant as follows:

``Let $J\subset R=k[x_0,\ldots, x_n]$ be a Borel fixed monomial ideal and $T\in R_{d}$ be a monomial. Suppose that $Tx_j\in \mathcal G(J)_{d+1}$ for some $j\geq\max(T)$. Then, we have 
\[\trm{$Tx_i \in \mathcal G(J)_{d+1} \text{ for each } i \text{ with }\max(T)\leq i \leq j$ .''}\]
\end{Remk}

Let $I$ be a homogeneous ideal of $R$. For any monomial term order $\tau$ there exists a flat family of ideals $I_t$ with $I_0=\ini_{\tau}(I)$ (the initial ideal of $I$) and $I_t$ canonically isomorphic to $I$ for all $t\neq 0$ (this implies that $\ini_{\tau}(I)$ has the same Hilbert function as that of $I$). Using this result, we have

\begin{Thm}\label{cancel}(The Cancellation Principle \cite[corollary~1.21]{G}) Choose any monomial term order $\tau$. For any homogeneous ideal $I$ and any $i$ and $d$, there is a complex of $k$-modules $V^d_{\bullet}$ such that
\begin{equation*}
V^d_i\simeq \Tor^R_i(\ini_{\tau}(I),k)_{i+d}~,~~~H_i(V^d_{\bullet})\simeq\Tor^R_i(I,k)_{i+d}~.
\end{equation*}
In particular, this implies that
$$\beta_{i,d}(R/I)\le \beta_{i,d}(R/\ini_{\tau}(I))~\textrm{for any $i, d$}.$$
\end{Thm}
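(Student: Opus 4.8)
The plan is to produce a single, generally non-minimal, graded free resolution of $R/I$ whose term-by-term graded ranks are exactly the Betti numbers of $R/\ini_{\tau}(I)$; tensoring such a resolution with $k$ and extracting the strand of internal degree $i+d$ then yields the complex $V^d_\bullet$, and the inequality falls out at once because homology of a complex is a subquotient of its terms. I resolve the quotient rings throughout; the form of the statement involving the ideals $I$ and $\ini_{\tau}(I)$ is the reindexing $\Tor_i(I,k)\cong\Tor_{i+1}(R/I,k)$, and the asserted bound $\beta_{i,d}(R/I)\le\beta_{i,d}(R/\ini_{\tau}(I))$ is read directly off the quotient version. First I would package the flat family into one module over its base: realizing $\tau$ by a weight, I form the $x$-homogeneous ideal $\tilde I\subseteq R[t]$ whose dehomogenization recovers $I$ and whose value at $t=0$ is $\ini_{\tau}(I)$, so $M:=R[t]/\tilde I$ is flat over $k[t]$ with special fiber $R/\ini_{\tau}(I)$ and all other fibers isomorphic to $R/I$. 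I then localize the base at $(t)$, working over $\mathcal O=k[t]_{(t)}$ and $R_{\mathcal O}:=R\otimes_k\mathcal O$, so that the special fiber is still $R/\ini_{\tau}(I)$ and the generic fiber (over the fraction field $k(t)$) is $R_{k(t)}/I_{k(t)}$, the base change of $R/I$.

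Next I would lift the minimal resolution across the degeneration. Let $\mathbb F_\bullet\to R/\ini_{\tau}(I)$ be the minimal graded free resolution over $R$, so the rank of $\mathbb F_i$ in internal degree $i+d$ equals $\beta_{i,d}(R/\ini_{\tau}(I))=\dim_k\Tor_i(R/\ini_{\tau}(I),k)_{i+d}$. Over $R_{\mathcal O}$ the parameter $t$ lies in the graded Jacobson radical, so the graded Nakayama lemma lets me lift a minimal generating set and then, inductively, the entire differential, to a complex $\tilde{\mathbb F}_\bullet$ of free $R_{\mathcal O}$-modules with the same graded ranks and with $\tilde{\mathbb F}_\bullet\otimes_{\mathcal O}k=\mathbb F_\bullet$; flatness of $M$ over $\mathcal O$ forces $\tilde{\mathbb F}_\bullet\to M_{\mathcal O}$ to be acyclic. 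Since inverting $t$ is exact, $\tilde{\mathbb F}_\bullet\otimes_{\mathcal O}k(t)$ is a (non-minimal) free resolution of the generic fiber $R_{k(t)}/I_{k(t)}$, still with graded ranks $\beta_{i,d}(R/\ini_{\tau}(I))$.

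Finally I would read off the complex. Tensoring this resolution with the residue field $k(t)$ of $R_{k(t)}$ produces a complex $V_\bullet$ whose $i$-th term in internal degree $i+d$ has dimension $\beta_{i,d}(R/\ini_{\tau}(I))$ and whose homology is $\Tor_i(R_{k(t)}/I_{k(t)},k(t))$, of graded dimension $\beta_{i,d}(R/I)$ by invariance of graded Betti numbers under the field extension $k\subseteq k(t)$. Restricting to internal degree $i+d$ gives $V^d_\bullet$ with $V^d_i\cong\Tor_i(\ini_{\tau}(I),k)_{i+d}$ and $H_i(V^d_\bullet)\cong\Tor_i(I,k)_{i+d}$ after the index shift, and since $H_i(V^d_\bullet)$ is a subquotient of $V^d_i$ we get $\beta_{i,d}(R/I)=\dim H_i(V^d_\bullet)\le\dim V^d_i=\beta_{i,d}(R/\ini_{\tau}(I))$.

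The hard part will be the lifting-and-specialization bookkeeping: one must carry the standard grading through the weight homogenization so that internal degrees are preserved, and one must check that flatness keeps $\tilde{\mathbb F}_\bullet$ acyclic both at the special fiber (to pin its ranks to the Betti numbers of $\ini_{\tau}(I)$) and at the generic fiber (to identify the homology with $\Tor_\bullet(I,k)$), where localization at $(t)$ is what avoids any spurious jump at a stray non-special value of $t$. If one only wants the inequality and not the complex, a cleaner self-contained route is the spectral sequence of the weight-filtered Koszul complex on $R/I$, whose $E_1$-page is $\Tor_\bullet(R/\ini_{\tau}(I),k)$ and which converges to $\Tor_\bullet(R/I,k)$; this exhibits each $\Tor_i(R/I)_{i+d}$ as a subquotient of $\Tor_i(R/\ini_{\tau}(I))_{i+d}$ and gives the bound immediately, though it does not by itself single out one complex $V^d_\bullet$ with the prescribed homology.
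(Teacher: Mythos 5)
Your proof is correct and is essentially the argument the paper relies on: the paper states this result with only a citation to Green's Corollary 1.21 after introducing exactly the flat Gr\"obner degeneration you construct, and that cited proof is precisely your scheme of lifting the minimal resolution of the special fiber over the localized base $k[t]_{(t)}$ and comparing with the generic fiber. The only cosmetic difference is that you read off the generic fiber over $k(t)$ rather than specializing at a general $t=c\in k^{\ast}$, which changes nothing since graded Betti numbers are invariant under the flat field extension $k\subseteq k(t)$.
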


\begin{Ex}
Consider a complete intersection $X$ of type $(2,2,2)$ in $\P^4$. The followings are Betti tables of $I_X$ and $\Gin(I_X)$.

\begin{center}
  \begin{tabular}{ccccccc}
$I_X$ &  & $\Gin(I_X)$\\[1ex]
{\tiny\texttt{
 \begin{tabular}{c|ccccccccccccccccccccccccccccc}
           &0 & 1 & 2 &3 &... \\[1ex]
        \hline\\
        0 & 1 & 0 &  0 & 0   & ...\\[1ex]
        1 & 0 & 3 &  0 & 0   & ...\\[1ex]
        2 & 0 & 0 &  3 & 0   & ...\\[1ex]
        3 & 0 & 0 &  0 & 1   & ...\\[1ex]
 \end{tabular}
}}

& $\Longleftrightarrow$
& {\tiny\texttt{
 \begin{tabular}{c|ccccccccccccccccccccccccccccc}
           &0 & 1 & 2 &3 &... \\[1ex]
        \hline\\
        0 & 1 & 0 &  0 & 0   & ...\\[1ex]
        1 & 0 & 3 &  2 & 0   & ...\\[1ex]
        2 & 0 & 2 &  4 & 2   & ...\\[1ex]
        3 & 0 & 1 &  2 & 1   & ...\\[1ex]
 \end{tabular}
}}\\[2ex]
\end{tabular}
\end{center}
The cancellation principle says that the minimal free resolution of $I_X$ is obtained from that of $\Gin(I_X)$ by canceling some adjacent terms of the same shift.
\end{Ex}

\begin{Remk}\label{remk:20140116-1}
 From Theorem~\ref{cancel}, we see that $\beta_{i,d}(R/I)\le \beta_{i,d}(R/\Gin(I))$ for any $i, d$. We can ask when the equality holds.  One case that we want to describe is the following. If we have 
 $$\beta_{k-1,m+1}(R/\Gin(I))=\beta_{k+1,m-1}(R/\Gin(I))=0, \text{ for some $k, m$ }$$
 then $\beta_{k,m}(R/I)=\beta_{k,m}(R/\Gin(I))$ by the cancelation principle. For example, if $\Gin(I)$ has a linear resolution then $I$ also has the same linear resolution as that of $\Gin(I)$.
\end{Remk}
Another case is the following, which will be used in this paper.
\begin{Lem}[Marginal Betti number]\label{lem:20140116-1}
For a homogeneous ideal $I \subset R$, suppose that $I$ is $(d+1)$-regular and the projective dimension of $R/I$ is at most $p$. Then we have  
  $$\beta_{p,d}(R/I)=\beta_{p,d}(R/\Gin(I)).$$
\end{Lem}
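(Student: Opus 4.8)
The plan is to apply the cancellation principle in the sharp form recorded in Remark~\ref{remk:20140116-1} to the single entry $(i,j)=(p,d)$ of the Betti table of $R/\Gin(I)$. Recall that this entry can be cancelled only against the two diagonally adjacent entries $\beta_{p-1,d+1}(R/\Gin(I))$ and $\beta_{p+1,d-1}(R/\Gin(I))$; once both of these are shown to vanish, Remark~\ref{remk:20140116-1} gives $\beta_{p,d}(R/I)=\beta_{p,d}(R/\Gin(I))$ immediately. Thus the entire argument reduces to establishing these two vanishings.

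First I would translate the two hypotheses into the language of Betti tables. Since $I$ is $(d+1)$-regular, we have $\reg(R/I)\le d$, and by Theorem~\ref{BS}(d) the regularity is unchanged under passage to the generic initial ideal, so $\reg(R/\Gin(I))\le d$ as well; equivalently $\beta_{i,j}(R/\Gin(I))=0$ for every $j>d$. In particular the upper-right cancellation partner $\beta_{p-1,d+1}(R/\Gin(I))$ vanishes, since it sits in row $d+1>d$. This disposes of one of the two entries essentially for free.

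The hard part will be the lower-left partner $\beta_{p+1,d-1}(R/\Gin(I))$, which lives in the admissible row $d-1\le d$ and is therefore not killed by regularity; to annihilate it I must show $\pd(R/\Gin(I))\le p$, so that column $p+1$ of the Gin Betti table is identically zero. This is exactly where the reverse lexicographic order becomes indispensable: the cancellation principle (Theorem~\ref{cancel}) only yields $\pd(R/I)\le\pd(R/\Gin(I))$, which is the wrong direction, and for a general term order the projective dimension can genuinely increase under $\Gin$. For the rlex order, however, the depth is preserved, $\depth(R/I)=\depth(R/\Gin(I))$ (a standard companion to the results of \cite{BS,G}), so the Auslander--Buchsbaum formula forces $\pd(R/\Gin(I))=\pd(R/I)\le p$, whence $\beta_{p+1,d-1}(R/\Gin(I))=0$.

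With both cancellation partners shown to vanish, Remark~\ref{remk:20140116-1} delivers the claimed equality $\beta_{p,d}(R/I)=\beta_{p,d}(R/\Gin(I))$. The only nontrivial ingredient is the depth-preservation property of rlex generic initial ideals; everything else is bookkeeping of which table entries are forced to be zero by the regularity bound (for the row $d+1$ partner) and the projective-dimension bound (for the column $p+1$ partner).
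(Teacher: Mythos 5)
Your proposal is correct and follows essentially the same route as the paper's proof: both verify that the two cancellation partners $\beta_{p-1,d+1}(R/\Gin(I))$ and $\beta_{p+1,d-1}(R/\Gin(I))$ vanish (the first via Theorem~\ref{BS}(d) on preservation of regularity, the second via invariance of projective dimension under rlex Gin) and then invoke Remark~\ref{remk:20140116-1}. The only cosmetic difference is that the paper cites \cite[corollary 2.8]{AM} for $\pd(R/I)=\pd(R/\Gin(I))$, whereas you justify the same fact through depth preservation for the reverse lexicographic order together with Auslander--Buchsbaum.
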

\begin{proof}
Note that the projective dimension of $R/I$ is the same as that of $R/\Gin(I)$ (see e.g. \cite[corollary 2.8]{AM}). So, $\beta_{p+1,d-1}(R/\Gin(I))=0$. Since $I$ is $(d+1)$-regular, from the result of Bayer-Stillman (Theorem \ref{BS} (d)), $\Gin(I)$ is also $(d+1)$-regular. Hence we have $\beta_{p-1,d+1}(R/\Gin(I))=0$. Then, it follows from 
Remark~\ref{remk:20140116-1} that $\beta=\beta_{p,d}(R/I)=\beta_{p,d}(R/\Gin(I))$.
\begin{figure}[!htb]
{\tiny\texttt{
 \begin{tabular}{c|ccccccccccccccccccccccccccccc}
             & 0 & 1 & 2 & $\cdots$  & p-1 &p& p+1&$\cdots$\\[1ex]
        \hline\\
        0 & 1 & 0 & 0  & $\cdots$ & 0& 0 & 0&$\cdots$\\[1ex]
        1 & 0 & * & *  & $\cdots$ & *& * & 0&$\cdots$ \\[1ex]
                $\vdots$ & $\vdots$ & $\vdots$ & $\vdots$  & $\cdots$ & $\vdots$& $\vdots$ & $\vdots$&$\cdots$ \\[1ex]  
 d-1   & 0 & * & *  & $\cdots$ & *& * & 0&$\cdots$ \\[1ex]  
 d   & 0 & * & *  & $\cdots$ & *& $\beta$  & 0&$\cdots$ \\[1ex]  
 d+1 &0 & 0 & 0  & $\cdots$ & 0& 0 & 0&$\cdots$ \\[1ex]       
         \end{tabular}}}
\caption{We call $\beta$ the \ti{marginal Betti number of $R/I$}.}
\label{marginal_betti}
\end{figure}
\end{proof}

\section{$\ND$ property and structures of reverse lexicographic Gins of 3-regular subschemes}\label{sect_str}

We begin with the following result.

\begin{Prop}[Basic inequality for degrees of $\ND$ subschemes]\label{basic_ineq}
Let $X$ be a closed subscheme of codimension $e$ in $\P^n$ satisfying $\ND$ property. Then, the following holds
\[\deg(X)\ge e+1~.\]
\end{Prop}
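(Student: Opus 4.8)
The plan is to reduce the statement to a zero-dimensional situation by intersecting $X$ with a general linear subspace of complementary dimension, and then to combine the nondegeneracy guaranteed by the $\ND$ property with an elementary cohomological count. So the whole argument splits into identifying $\deg(X)$ as a length, and then bounding that length from below.

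First I would take a \emph{general} linear subspace $\Lambda=\Lambda^{e}$ of dimension $e$. Since $\dim X=n-e$, the projective dimension theorem shows that $Z:=X\cap\Lambda^{e}$ is a nonempty zero-dimensional scheme, and iterating $n-e$ general hyperplane sections does not drop the leading coefficient of the Hilbert polynomial of $R/I_X$; thus the (constant) Hilbert polynomial of $R/I_Z$ equals $\deg(X)$, i.e. $\mathrm{length}(Z)=\deg(X)$. Here I would remark that a general $\Lambda^{e}$ avoids every component of $X$ of dimension $<n-e$, so the length of $Z$ records exactly the top-dimensional part, which is what $\deg(X)$ measures.

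Next, by the $\ND$ property applied in the minimal allowed dimension $e$, the finite scheme $Z=X\cap\Lambda^{e}$ is nondegenerate inside $\Lambda^{e}\cong\P^{e}$, that is $H^0(\mathcal{I}_{Z/\Lambda^{e}}(1))=0$. Finally I would run the standard cohomology sequence: from
\[ 0 \to \mathcal{I}_{Z/\Lambda^{e}}(1) \to \mathcal{O}_{\Lambda^{e}}(1) \to \mathcal{O}_Z(1) \to 0 \]
and the vanishing above, the restriction map $H^0(\mathcal{O}_{\Lambda^{e}}(1)) \hookrightarrow H^0(\mathcal{O}_Z(1))$ is injective. Since $\dim_k H^0(\mathcal{O}_{\Lambda^{e}}(1))=e+1$ and, $Z$ being finite, $\dim_k H^0(\mathcal{O}_Z(1))=\mathrm{length}(Z)$, I conclude $\mathrm{length}(Z)\ge e+1$, hence $\deg(X)=\mathrm{length}(Z)\ge e+1$.

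The main obstacle will be the first step: rigorously identifying $\deg(X)$ with $\mathrm{length}(X\cap\Lambda^{e})$ for a general $\Lambda^{e}$ in the scheme-theoretic sense, because $X$ is allowed to be non-reduced and non-equidimensional, so one must check that general hyperplane sections neither drop the leading coefficient of the Hilbert polynomial nor let embedded or lower-dimensional components contribute spurious length. A clean way to keep this compatible with the machinery of Section \ref{sec_gins} is to phrase it on the reverse-lexicographic Gin, where cutting by a general hyperplane is the operation $|_{x_n\to0}$ of Theorem \ref{BS}(a) and the degree is read off combinatorially from the generators of $\Gin(I_X)$.
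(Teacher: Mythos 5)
Your proposal is correct and follows essentially the same route as the paper: the paper reduces to the zero-dimensional case by induction on $\dim X$ (using that a general hyperplane section preserves the $\ND$ property, the codimension, and the degree) and then applies exactly your cohomological count, namely that nondegeneracy of the finite section forces $H^0(\mathcal{O}_{\P^e}(1))\hookrightarrow H^0(\mathcal{O}_Z(1))$ and hence $\deg(X)=\mathrm{length}(Z)\ge e+1$. You merely compress the induction into a single cut by a general $\Lambda^{e}$; the degree-preservation issue you flag as the main obstacle is precisely the assertion carried by the paper's inductive step.
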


\begin{proof}
Let us prove by induction on $\dim X$. First, if $\dim X=0$ (so, $e=n$), then by $\ND$ property of $X$, $H^0(\mathcal I_{X/\P^n}(1))=0$ and the sequence $0\to H^0(\mathcal O_{\P^n}(1))\to H^0(\mathcal O_X(1))$ tells us that $\deg(X)\ge n+1=e+1$, as we wish. Now, suppose that the statement holds for any $\ND$ subscheme of dimension $\le k$ for some $k\ge 0$. Then, let us consider a $\ND$ subscheme $X$ of dimension $k+1$. For a general hyperplane section $X\cap H$ still satisfies $\ND$ property and has the same codimension and degree as those of $X$, by induction hypothesis we have
\begin{align*}
\deg(X)=\deg(X\cap H)\ge e+1~,
\end{align*}
and the assertion is proved.
\end{proof}

Note that this is not true in general (see Example \ref{non-ND_ex}).

\begin{Thm}[$K_{p,1}$-theorem for $\ND$ subscheme]\label{thm:20140212-1}
Let $X$ be a closed subscheme of codimension $e$ in $\P^n$ satisfying $\ND$ property. Then we have 
$$\beta_{i,1}(R/\Gin(I_X))=0~\trm{for any $i>e=\codim(X,\P^{n})$}.$$ Therefore, we also have $\beta_{i,1}(R/I_X)=0$ for any $i>e$.
\end{Thm}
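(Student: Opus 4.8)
The plan is to reduce everything to a statement about the reverse lexicographic generic initial ideal and then to use the hyperplane-section behavior of Gins (Theorem \ref{BS}) together with the Eliahou--Kervaire formula (Theorem \ref{EK}). Since $\beta_{i,1}(R/I_X)\le\beta_{i,1}(R/\Gin(I_X))$ by the Cancellation Principle (Theorem \ref{cancel}), it suffices to prove the vanishing for $J:=\Gin(I_X)$. By Theorem \ref{EK}, $\beta_{i,1}(R/J)=\sum_{T\in\mathcal G(J)_2}\binom{\max(T)}{i-1}$, so $\beta_{i,1}(R/J)=0$ for all $i>e$ is \emph{equivalent} to the assertion that every quadratic minimal generator $T\in\mathcal G(J)_2$ satisfies $\max(T)\le e$. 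Thus the whole theorem becomes the combinatorial claim: \emph{no degree-$2$ minimal generator of $J$ involves a variable $x_i$ with $i>e$.}

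The main work is proving that claim, and the natural route is induction on $\dim X$ via general hyperplane sections. The base case is $\dim X=0$, i.e.\ $e=n$, where there is nothing to prove since $\max(T)\le n=e$ automatically. For the inductive step I would exploit the $\ND$ property's stability under general hyperplane sections: if $X$ has $\ND$ property then so does a general section $X\cap H$, with the same codimension $e$. Set $\bar X=X\cap H$ living in $\P^{n-1}$ with coordinate ring $S=k[x_0,\dots,x_{n-1}]$. By Theorem \ref{BS}(a) one has $\Gin(I_{\bar X})=J|_{x_n\to 0}=(J,x_n)/(x_n)$ on the quadratic level, so the quadratic generators of $\Gin(I_{\bar X})$ are exactly those quadratic generators of $J$ not divisible by $x_n$. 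The induction hypothesis gives that every quadratic generator of $\Gin(I_{\bar X})$ has $\max\le e$; hence every quadratic generator $T\in\mathcal G(J)_2$ with $\max(T)<n$ already has $\max(T)\le e$. What remains is to rule out a quadratic minimal generator $T$ of $J$ with $\max(T)=n$.

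To handle that remaining case I would use the saturation side of Theorem \ref{BS}, namely (b) and (c). Because $I_X$ is saturated, $\sat(\Gin(I_X))$ equals the top degree of a generator involving $x_n$ (Theorem \ref{BS}(c)); a quadratic generator $T$ with $\max(T)=n$ would be such a generator in degree $2$, and I would argue this forces, via Borel-fixedness and Lemma \ref{lem:20140127-0}/Remark \ref{lem:20140127-1}, a whole collection of quadrics $x_jx_n\in\mathcal G(J)_2$ for $\max(T)\le j\le n$, which translates (by passing to $J|_{x_n\to1}$, the Gin of the saturation of the section, via Theorem \ref{BS}(b)) into a linear form in $\Gin(I_{(\bar X)^{\mathrm{sat}}})$. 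That linear form contradicts the nondegeneracy of the general section $\bar X$ guaranteed by the $\ND$ property, exactly as in Definition \ref{def_ND1}. I expect this last step---showing that a quadratic generator with $\max=n$ descends to a \emph{linear} form on the saturated section and thereby violates nondegeneracy---to be the crux of the argument, since it is where the geometric $\ND$ hypothesis gets converted into the purely combinatorial constraint on $J$; the Borel-fixed lemmas and the quotient/saturation dictionary of Theorem \ref{BS} are the tools that make the conversion precise.
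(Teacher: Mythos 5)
Your overall strategy (Cancellation Principle plus Eliahou--Kervaire to reduce everything to a combinatorial constraint on $\mathcal G(\Gin(I_X))_2$, then the Bayer--Stillman dictionary, Borel-fixedness, and $\ND$ to enforce that constraint) is the same mechanism the paper uses, but there is a concrete quantitative error in your key translation step. By Theorem \ref{EK}, $\beta_{i,1}(R/J)=\sum_{T\in\mathcal G(J)_2}\binom{\max(T)}{i-1}$, and $\binom{\max(T)}{i-1}=0$ exactly when $\max(T)<i-1$; the binding case is $i=e+1$, so the vanishing of $\beta_{i,1}(R/J)$ for \emph{all} $i>e$ is equivalent to $\max(T)\le e-1$ for every $T\in\mathcal G(J)_2$, not $\max(T)\le e$ as you claim. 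A quadric generator with $\max(T)=e$ contributes $\binom{e}{e}=1$ to $\beta_{e+1,1}(R/J)$. As written, your induction therefore proves only $\beta_{i,1}(R/I_X)=0$ for $i\ge e+2$ and misses the boundary case $i=e+1$, which is precisely the substantive content of a $K_{p,1}$-type theorem. Once the bound is corrected, your base case is no longer ``nothing to prove'': for $\dim X=0$ you must show $\max(T)\le n-1$, which follows because $I_X$ is saturated, hence $\Gin(I_X)$ is saturated (Theorem \ref{BS}(c)) and no minimal generator involves $x_n$. That same fact makes your worried-about case $\max(T)=n$ vacuous in every dimension, so no separate contradiction argument is needed there; incidentally, your dictionary in that step is also off, since $J|_{x_n\to1}$ is the saturation of $J$ itself, not the Gin of the saturated section, which is $(J|_{x_n\to0})|_{x_{n-1}\to1}$ by Theorem \ref{BS}(b).

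A second repair: your identification ``the quadratic generators of $\Gin(I_{X\cap H/H})$ are exactly those quadratic generators of $J$ not divisible by $x_n$'' is false in general. The section's Gin can acquire \emph{extra} quadric generators; indeed Proposition \ref{prop:20140211}(c) of the paper shows $\mathcal G(\Gin(I_{X\cap H/H}))_2=\mathcal G(\Gin(I_X))_2\sqcup\{T_1,\dots,T_r\}$ with $r=h^1(\mathcal I_{X/\P^n}(1))$ possibly nonzero. Fortunately your induction needs only the inclusion $\mathcal G(J)_2\subseteq\mathcal G(\Gin(I_{X\cap H/H}))_2$, which does hold, but requires the justification that each monomial of $J_2$ remains a \emph{minimal} generator downstairs; this is exactly where $\ND$ enters your inductive step, since nondegeneracy of $X\cap H$ gives $\Gin(I_{X\cap H/H})_1=0$, so every quadric in that ideal is a minimal generator. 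With the corrected bound $\max(T)\le e-1$ and these repairs your induction does prove the theorem, and it is a legitimate alternative to the paper's argument, which avoids induction altogether: from a generator with $\max(T_0)\ge e$ it uses Borel-fixedness to place $x_1x_{i-1}$ in $\Gin(I_X)$, cuts once with a general linear space $\Lambda$ of dimension $i-1\ge e$, and reads off the linear form $x_1\in\Gin(I_{X\cap\Lambda/\Lambda})$, contradicting $\ND$ in one step.
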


\begin{proof}
The cancellation principle (Theorem~\ref{cancel}) implies that
$$\beta_{i,1}(R/I_X)\le \beta_{i,1}(R/\Gin(I_X))~\textrm{for any $i$}~.$$
Therefore, it suffices to show that
$$ \beta_{i,1}(R/\Gin(I_X))=0~\textrm{for any $i>e$}~.$$
Note that the assumption on $\ND$ property of $X$ guarantees that the linear section $X\cap \Lambda$ is nondegenerate for a general linear subspace $\Lambda$ of dimension $\geq e$.

Suppose that $\beta_{i,1}(R/I_X)\neq0$ for some $i>e$. Then, from Theorem~\ref{EK} we have
\begin{eqnarray*}
0&<&  \beta_{i,1}(R/\Gin(I_X))~=~\sum_{T\in \mathcal G(\Gin(I_X))_2}\binom{\max(T)}{i-1}~,\\
&\Rightarrow&\exists~ T_0\in \mathcal G(\Gin(I_X))_2 \textrm{ such that } \max(T_0)\geq i-1\geq e.
\end{eqnarray*}
By Borel fixed property we may assume that $T_0^\prime=x_1 x_{i-1}$ in $\Gin(I_X)
_2$.
Consider a general linear section $X\cap\Lambda$ with $\dim \Lambda=i-1$. Then, by Theorem~\ref{BS} (a) and (b),
\[\Gin(I_{X\cap\Lambda/\Lambda})=\left[\frac{(\Gin(I_X), x_{i}, x_{i+1},\ldots, x_{n})}{(x_{i}, x_{i+1},\ldots, x_{n})}\right]^{\sat}=\left[\frac{(\Gin(I_X), x_{i}, x_{i+1},\ldots, x_{n})}{(x_{i}, x_{i+1},\ldots, x_{n})}\right]_{x_{i-1}\to 1},\]
and thus $T_0^\prime$ gives $x_1\in \Gin(I_{X\cap \Lambda/\Lambda})$, which implies that $I_{X\cap \Lambda/\Lambda}$ has a linear form. This contradicts that $X\cap \Lambda$ is nondegenerate.
\end{proof}

\begin{Remk}\label{remk_NP}
This was first known by Green's $K_{p,1}$-theorem for compact complex manifolds (\cite{G2}) and later by Nagel-Pitteloud for a more general setting (\cite{NP}). Here, by utilizing $\ND$ assumption, the proof of Theorem \ref{thm:20140212-1} is quite simple. But, it says the result for more general cases than those treated in \cite{G2,NP}. For example, a case such as (c) in Example \ref{ND_ex}, a subscheme whose ideal contains some power of a linear form, can not be covered by previous results, while this method can do.
\end{Remk}

\begin{Ex}[Non-$\ND$ subschemes]\label{non-ND_ex}
If a closed subscheme $X\subset \P^n$ does not satisfy $\ND$, then Theorem \ref{thm:20140212-1} is no longer true. For examples,
\begin{itemize}
\item[(a)] (Two planes in $\P^4$) Let $X\subset \P^4$ be a closed subscheme  defined by 
$$I_X=(x_0,x_1)\cap (x_2,x_3)\subset R=k[x_0,x_1,x_2,x_3,x_4].$$
Since $X$ is a union of two planes in four space meeting in a single point, we see that $X$ and $X\cap H$ are nondegenerate for a general hyperplane $H$. However, because $X\cap H$ is a union of two skew lines in $H$, for a general 2-plane $\Lambda$ the intersection $X\cap \Lambda$ is a set of two points, degenerate. Thus, $X$ does not satisfy $\ND$ property. We see that $\deg(X)=2$ and $2\ngtr2+1$, so the inequality in Proposition \ref{basic_ineq} does not hold for this non-$\ND$ subscheme. The Betti table of $X$ is given by
\[{\tiny\texttt{
 \begin{tabular}{c|ccccccccccccccccccccccccccccc}
             & 0 & 1 & 2 & 3  & 4 & \\[1ex]
        \hline\\
        0 & 1 & 0 & 0  & 0 & 0& \\[1ex]
        1 & 0 & 4 & 4  & 1 & 0& \\[1ex]
        2 & 0 & 0 & 0  & 0 & 0& \\[1ex]
         \end{tabular}
}}~.\]
Note that codimension of $X$ is two but $\beta_{3,1}\neq 0$, that is, Theorem \ref{thm:20140212-1}  does not hold.
\item[(b)] Consider the following non-reduced closed subscheme $X\in \P^3$ defined by the ideal
\[
\begin{array}{llllllll}
 I_X&=&(x_0x_2-x_1x_2 -x_0x_1x_2+x_1^2x_2 -x_2^2 -x_0^2x_1+x_0x_1^2 -x_1x_2 -x_0x_2)\\
  &= &(x_0,x_2)\cap(x_1,x_2)\cap(x_0-x_1,x_2)\cap(x_0,x_1,x_2)^2.
 \end{array}
 \]
 Then $X$ is a union of three lines in a plane $x_2=0$ with an embedded point at the origin. 
Since a general hyperplane section gives $3$-collinear points in a line, we see that $X$ does not satisfy $\ND$. On the other hand, Betti table of $R/I_X$ is given by
 \[{\tiny\texttt{
 \begin{tabular}{c|ccccccccccccccccccccccccccccc}
             & 0 & 1 & 2 & 3  & 4 & ... \\[1ex]
        \hline\\
        0 & 1 & 0 & 0  & 0 & 0&... \\[1ex]
        1 & 0 & 3 & 3  & 1 & 0&... \\[1ex]
        2 & 0 & 1 & 1  & 0 & 0&... \\[1ex]
         \end{tabular}
}}\]
such that $\beta_{e+1,1}(R/I_X)=1\neq 0$, $\beta_{e+1,2}(R/I_X)=0$ and $\beta_{e,2}(R/I_X)=1$. In this case, Theorem~\ref{main_thm} (a) is not true. But, by simple computation, we also see that $h^1(\mathcal I_{X/\P^3}(1))=0$ and $h^1(\mathcal I_{X\cap H/H}(1))=1$. So, even though  $X$ is non-$\ND$, the relation in Theorem~\ref{main_thm} (b) still holds.
\end{itemize}
\end{Ex}

\begin{Remk} We suspect that it might not be possible that the cancellation (see Theorem \ref{cancel}) happens at the Betti numbers whose homological index is greater than the codimension $e$. If this is true, then, as shown in Example \ref{non-ND_ex} (b), the tailing Betti formulae in Theorem~\ref{main_thm} (b) is still true without $\ND$ assupmtion.
\end{Remk}

\begin{Coro}\label{coro:20140219}
Let $X$ be a closed scheme of codimension $e$ in $\P^n$ satisfying $\ND$ property. Suppose that $X$ is $3$-regular. Then we have 
$$\beta_{i,2}(R/I_X)=\beta_{i,2}(R/\Gin(I_X))~\trm{for any $i\geq e$.}$$
\end{Coro}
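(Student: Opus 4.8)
The plan is to read the equality off directly from the refined form of the Cancellation Principle recorded in Remark~\ref{remk:20140116-1}. That refinement says that if $\beta_{k-1,m+1}(R/\Gin(I))=\beta_{k+1,m-1}(R/\Gin(I))=0$ for some pair $(k,m)$, then no cancellation can occur at the $(k,m)$ spot, so $\beta_{k,m}(R/I)=\beta_{k,m}(R/\Gin(I))$. I would apply this with $m=2$ and $k=i$ for each fixed $i\ge e$, so that the target identity $\beta_{i,2}(R/I_X)=\beta_{i,2}(R/\Gin(I_X))$ reduces to checking the two neighbouring vanishings $\beta_{i-1,3}(R/\Gin(I_X))=0$ and $\beta_{i+1,1}(R/\Gin(I_X))=0$.

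The first vanishing comes from $3$-regularity. Since $X$ is $3$-regular, $I_X$ is $3$-regular as a (saturated) ideal, hence $\reg(R/I_X)=2$; by Theorem~\ref{BS}~(d), $\Gin(I_X)$ has the same regularity, so its Betti table occupies only the three rows $j=0,1,2$. In particular row $j=3$ is empty, which gives $\beta_{i-1,3}(R/\Gin(I_X))=0$. The second vanishing is exactly the content of the $K_{p,1}$-type Theorem~\ref{thm:20140212-1}: because $X$ satisfies $\ND$, we have $\beta_{\ell,1}(R/\Gin(I_X))=0$ for every $\ell>e$, and since $i\ge e$ forces $i+1>e$, this yields $\beta_{i+1,1}(R/\Gin(I_X))=0$.

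Combining the two, the hypotheses of Remark~\ref{remk:20140116-1} are satisfied at $(k,m)=(i,2)$ for every $i\ge e$, and the asserted equality follows. I expect this proof to be short once Theorem~\ref{thm:20140212-1} is in hand, since the real geometric/homological input has already been absorbed into that theorem; the only remaining step requiring any care is the regularity bookkeeping that clears row $j=3$ of the Gin's Betti table, namely correctly matching the geometric $3$-regularity of $X$ with the algebraic bound $\reg(R/I_X)=2$ (equivalently $\reg(I_X)=3$) and transferring it to $\Gin(I_X)$ through Theorem~\ref{BS}~(d). That translation, rather than any substantial new argument, is where I would be most careful.
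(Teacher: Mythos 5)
Your proposal is correct and follows exactly the paper's own argument: both invoke Theorem~\ref{thm:20140212-1} for the vanishing $\beta_{i+1,1}(R/\Gin(I_X))=0$, use $3$-regularity (transferred to $\Gin(I_X)$ via Theorem~\ref{BS}~(d)) for $\beta_{i-1,3}(R/\Gin(I_X))=0$, and conclude by the refined cancellation statement in Remark~\ref{remk:20140116-1}. The only difference is that you spell out the regularity bookkeeping that the paper leaves implicit.
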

\begin{proof}
For each $i\geq e$, we see from Theorem~\ref{thm:20140212-1} that $\beta_{i+1,1}(R/\Gin(I_X))=0$. Since $X$ is $3$-regular  we also have $\beta_{i-1,3}(R/\Gin(I_X))=0$ for any $i$. Then it follows from Remark~\ref{remk:20140116-1} that  $\beta_{i,2}(R/I_X)$ is equal to $\beta_{i,2}(R/\Gin(I_X))$, as we wished.
\end{proof}

Once $X$ is $(d+1)$-regular, $X$ is $m$-normal for all $m\ge d$ (see e.g. \cite[pg. 100]{Mum}). Then, what can we say about $h^1(\mathcal{I}_{X/\P^n}(d-1))$? It is quite remarkable that the next normality $h^1(\mathcal{I}_{X/\P^n}(d-1))$ can be read off from the \ti{generation structure} of $\Gin(I_X)$, as the following proposition says:

\begin{Prop}\label{lem:20140113-1}
Let $X$ be any closed subscheme in $\P^n$ and $I_X$ be the saturated defining ideal of $X$. Suppose that $X$ is (d+1)-regular for some $d\geq 1$. Then, we have
\begin{itemize}
\item[(a)] $h^1(\mathcal{I}_{X/\P^n}(d-1))=\mid \mathcal{M}_{n-1}(d+1,\Gin(I_X))\mid$;
\item[(b)] (Marginal Betti number) $\beta_{n,d}(R/I_X)=h^1(\mathcal I_{X/\P^n}(d-1))$.
\end{itemize}
\end{Prop}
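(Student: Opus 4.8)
The plan is to show that both parts reduce to the single identity
$|\mathcal{M}_{n-1}(d+1,\Gin(I_X))|=h^1(\mathcal{I}_{X/\P^n}(d-1))$, from which (a) is immediate and (b) follows by a Betti-number bookkeeping. First I would dispose of (b) granting this identity. Since $I_X$ is saturated, $\depth(R/I_X)\ge 1$, so $\pd(R/I_X)\le n$ by Auslander--Buchsbaum, and the Marginal Betti Number Lemma \ref{lem:20140116-1} (applicable because $X$ is $(d+1)$-regular) yields $\beta_{n,d}(R/I_X)=\beta_{n,d}(R/\Gin(I_X))$. By Eliahou--Kervaire (Theorem \ref{EK}) this equals $\sum_{T\in\mathcal{G}(\Gin(I_X))_{d+1}}\binom{\max(T)}{n-1}$; but saturatedness forces $\Gin(I_X)$ to have no minimal generator divisible by $x_n$ (Theorem \ref{BS}(c)), so every contributing $T$ has $\max(T)=n-1$ and contributes exactly $1$, giving $\beta_{n,d}(R/I_X)=|\mathcal{M}_{n-1}(d+1,\Gin(I_X))|$. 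Thus (b) follows once the identity, which is precisely (a), is established.

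To prove the identity I would pass to a general hyperplane section. Let $L$ be a general linear form, $S=k[x_0,\dots,x_{n-1}]$, and $\bar I_X=(I_X,L)/(L)$, whose saturation is the ideal $I_Y$ of $Y=X\cap H$. As $I_X$ is saturated, a general $L$ avoids all associated primes and is a nonzerodivisor on $R/I_X$, so $0\to(R/I_X)(-1)\xrightarrow{L}R/I_X\to S/\bar I_X\to 0$ is exact. Passing to local cohomology and using $H^0_{\mathfrak m}(R/I_X)=0$, the degree-$d$ part reads $H^0_{\mathfrak m}(S/\bar I_X)_d\cong\ker\big(H^1_{\mathfrak m}(R/I_X)_{d-1}\xrightarrow{L}H^1_{\mathfrak m}(R/I_X)_d\big)$. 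Since $X$ is $(d+1)$-regular, $H^1_{\mathfrak m}(R/I_X)_d\cong H^1(\mathcal{I}_{X/\P^n}(d))=0$, so the kernel is all of $H^1_{\mathfrak m}(R/I_X)_{d-1}$ and hence $h^1(\mathcal{I}_{X/\P^n}(d-1))=\dim(\bar I_X^{\sat}/\bar I_X)_d$. Writing $\bar J:=\Gin(I_X)|_{x_n\to0}=\Gin(\bar I_X)$ and $\bar J^{\sat}:=\bar J|_{x_{n-1}\to1}=\Gin(I_Y)$ by Theorem \ref{BS}(a),(b), and using that $\Gin$ preserves Hilbert functions, I would rewrite this as $h^1(\mathcal{I}_{X/\P^n}(d-1))=\dim(\bar J^{\sat}/\bar J)_d$.

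It then remains to prove the combinatorial identity $\dim(\bar J^{\sat}/\bar J)_d=|\mathcal{M}_{n-1}(d+1,\bar J)|$, and I expect this to be the main obstacle. (Note that $\bar J$ and $\Gin(I_X)$ have the same minimal generators, none involving $x_n$, so the right side is $|\mathcal{M}_{n-1}(d+1,\Gin(I_X))|$ and this completes (a).) The candidate bijection is multiplication by the last variable, $u\mapsto x_{n-1}u$, from monomials $u$ of degree $d$ with $u\in\bar J^{\sat}\setminus\bar J$ to $\mathcal{M}_{n-1}(d+1,\bar J)$. The delicate point, where regularity is indispensable, is that $x_{n-1}u$ actually lands in $\bar J$: since $\bar J$ is Borel-fixed its regularity equals its top generator degree, which is $\le d+1$ because $\reg(\Gin(I_X))\le d+1$, so $\bar J^{\sat}/\bar J=H^0_{\mathfrak m}(S/\bar J)$ vanishes in degrees $\ge d+1$, forcing $x_{n-1}u\in\bar J^{\sat}_{d+1}=\bar J_{d+1}$. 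Minimality of $x_{n-1}u$ and surjectivity I would then extract from the Borel-fixed property: if $x_{n-1}u=Tx_\ell$ with $T\in\mathcal{G}(\bar J)$ and $\ell<n-1$, lowering $x_{n-1}$ to $x_\ell$ forces $u\in\bar J$, a contradiction, while any $T\in\mathcal{M}_{n-1}(d+1,\bar J)$ satisfies $T/x_{n-1}\in(\bar J:x_{n-1})\setminus\bar J$ and is therefore hit. I would stress that the position of $d$ relative to the regularity is genuinely used: a small Borel-fixed example shows that $\dim(\bar J^{\sat}/\bar J)_j$ and $|\mathcal{M}_{n-1}(j+1,\bar J)|$ differ for $j$ below the critical degree, so the vanishing of $H^0_{\mathfrak m}(S/\bar J)$ in degree $d+1$ cannot be dispensed with.
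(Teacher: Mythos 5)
Your proposal is correct and follows essentially the same route as the paper: part (b) via the marginal-Betti-number lemma plus Eliahou--Kervaire with the observation that no generator of $\Gin(I_X)$ involves $x_n$, and part (a) by reducing $h^1(\mathcal{I}_{X/\P^n}(d-1))$ to $\dim\bigl(\Gin(\bar{I}^{\sat})/\Gin(\bar{I})\bigr)_d$ through the hyperplane-section sequence, Hilbert-function preservation and Theorem \ref{BS}, then counting via the correspondence $u\leftrightarrow x_{n-1}u$ with $\mathcal{M}_{n-1}(d+1,\Gin(I_X))$. The single point where you deviate is local and immaterial: where the paper proves the key containment (its ``$k_0=1$'' step) by an explicit Borel-fixed exchange argument, you deduce $x_{n-1}u\in\bar{J}$ from the vanishing of $H^0_{\mathfrak{m}}(S/\bar{J})$ in degrees $\ge d+1$, using that the regularity of a Borel-fixed ideal is its top generator degree --- both arguments rest on the same $(d+1)$-regularity of the Gin.
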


\begin{proof} (a) Let $H$ be the hyperplane defined by a general linear form $L$, $\bar{I}=\frac{(I_{X},L)}{(L)}$ and 
$$\mathcal{M}_{n-1}(d+1,\Gin(I_X))=\{T_1x_{n-1}, T_2x_{n-1}, \ldots, T_rx_{n-1}\}.$$ 
Since $X$ is $(d+1)$-regular, $X$ is also $d$-normal, that is $H^1(\mathcal I_{X/\P^n}(d))=0$. Then, from the exact sequence
\[
0\rightarrow H^0(\mathcal I_{X/\P^n} (d-1)) \stackrel{\cdot L}{\longrightarrow} H^0(\mathcal I_{X/\P^n} (d)) \rightarrow
H^0(\mathcal I_{X\cap H/H}(d)) \rightarrow H^1(\mathcal I_{X/\P^n} (d-1))\rightarrow 0~,\]
it follows that
\begin{align*}
h^1(\mathcal{I}_{X/\P^n}(d-1))=&\,\dim_k(\bar{I}^{sat}/\bar{I})_d=\,\dim_k(\Gin(\bar{I}^{sat})/\Gin(\bar{I}))_d\\[1ex]
=&\,\dim_k\left[\frac{(\Gin(I_X)|_{\,x_n\rightarrow0})|_{x_{n\!-\!1}\rightarrow
1}}{\Gin(I_X)|_{x_n\rightarrow0}}\right]_d\quad\trm{(by Theorem \ref{BS} (a),(b)) $\cdots(\ast)$}              
\end{align*}
Now, we claim that $(\ast)$ is equal to $\mid \mathcal{M}_{n-1}(d+1,\Gin(I_X))\mid$.

It is clear that each element of $\mathcal{M}_{n-1}(d+1,\Gin(I_X))$ contributes to $(\ast)$ by one. For the converse, choose any degree $d$ monomial $T\in \left[(\Gin(I_X)|_{\,x_n\rightarrow0})|_{x_{n\!-\!1}\rightarrow
1}\setminus\Gin(I_X)|_{x_n\rightarrow0}\right]$, so that $\max(T)<n-1$, $T\notin\Gin(I_X)_d$ and $T x_{n-1}^k\in\Gin(I_X)_{d+k}$ for some $k\ge 1$. Let $k_0$ be the minimum of such $k$'s. Then, we are enough to show that $k_0=1$, for this also implies $T x_{n-1}\in \mathcal{M}_{n-1}(d+1,\Gin(I_X))$ by Lemma \ref{lem:20140127-0}.

Suppose that $k_0\ge 2$. By the result of Bayer-Stillman (Theorem \ref{BS} (d)), $\Gin(I_X)$ is also $(d+1)$-regular and we could write $Tx_{n-1}^{k_0}$ as $M x_\ell$ for some $M\in \Gin(I_X)_{d+k_0-1}$ and some $x_\ell\in R_1$. Due to minimality of $k_0$, note that $x_\ell\neq x_{n-1}$ (i.e. $\ell<n-1$). Then, $\left(T/x_{\ell}\right)\cdot x_{n-1}^{k_0}\in\Gin(I_X)$ and by Borel fixed property $T\cdot x_{n-1}^{k_0-1}=\left(T/x_{\ell}\right)\cdot x_{\ell}x_{n-1}^{k_0-1}$ also belong to $\Gin(I_X)$, which is again a contradiction to minimality of $k_0$.

(b) Since $I_X$ is a saturated ideal, $\depth(R/I_X)\geq 1$, which implies that the projective dimension of $R/I_X$ is at most $n$. Hence, we see that
\begin{eqnarray*}
\beta_{n,d}(R/I_X)&=&\beta_{n,d}(R/\Gin(I_X))\quad(\textrm{by Lemma~\ref{lem:20140116-1}})\\
&=& \sum_{T\in \mathcal G(\Gin(I_X))_{d+1}}{\max(T) \choose n-1}\quad(\textrm{by Theorem~\ref{EK}})\\
&=&\#\{{\bf x}^K\in \mathcal G(\Gin(I_X))_{d+1}~|~\max({\bf x}^K)=n-1\}\\
&=& \mid \mathcal{M}_{n-1}(d+1,\Gin(I_X)) \mid ~= h^1(\mathcal I_{X/\P^n}(d-1))\quad(\textrm{by (a)}),
\end{eqnarray*}
as we wished.
\end{proof}

\begin{Remk}\label{remk:20140219}
\begin{itemize}
\item[(a)] With the same notation as Proposition \ref{lem:20140113-1}, the argument used in the proof of Proposition \ref{lem:20140113-1} (a) actually shows that
\begin{align*}
\trm{$\left(\Gin(\bar{I}^{sat})/\Gin(\bar{I})\right)_d$ is exactly same as the $k$-vector space $\langle T_1, T_2, \ldots, T_r\rangle$,}
\end{align*}
where $\mathcal{M}_{n-1}(d+1,\Gin(I_X))=\{T_1 x_{n-1}, T_2 x_{n-1}, \ldots, T_r x_{n-1}\}$.
 \item[(b)] If we only concern about Proposition \ref{lem:20140113-1} (b), then it can be also shown using \ti{local duality} (see e.g. \cite[Appendix A]{E2} for the statement of local duality). Here, the proof is given by a completely different method using combinatorial properties of Gins.
\end{itemize}
\end{Remk}

\begin{Ex}[Quadratic normality of a rational curve]\label{ex:20140112-3}
Consider a smooth rational curve $C\subset \mathbb P^3$ of degree
5 defined by the map:
\[[s,t] \rightarrow [s^5,s^4t,st^4,t^5].\]
Then, using \texttt{Macaulay 2} (\cite{M2}) we know that $I_C$ and $\Gin(I_C)$ are minimally generated as:
\[
\begin{array}{llllllllllllllllllll}
\bullet & I_C = (x_1x_2-x_0x_3,x_2^4-x_1x_3^3,x_0x_2^3-x_1^2x_3^2,
    x_0^2x_2^2-x_1^3x_3,x_1^4-x_0^3x_2)\\[1ex]
\bullet & \Gin(I_C)
=(x_0^2,x_0x_1^3,x_1^4,x_0x_1^2x_2,x_1^3x_2)
\end{array}
\]
and the Betti table of $R/I_C$ is given by
\[{\tiny\texttt{
 \begin{tabular}{c|ccccccccccccccccccccccccccccc}
           & 0 & 1 &  2 & 3   & 4\\[1ex]
        \hline\\
        0 & 1 & 0 &  0 & 0   & 0\\[1ex]
        1 & 0 & 1 &  0 & 0   & 0\\[1ex]
        2 & 0 & 0 &  0 & 0   & 0\\[1ex]
        3 & 0 & 4 &  6 & 2   & 0\\[1ex]
 \end{tabular}
}}~.\]
Note that $\mathcal{M}_{2}(4,I_C)=\{x_1^3x_2, x_0x_1^2x_2\}$. Since the maximal degree of generators of $\Gin(I_C)$ is $4$, we know that $I_C$ is $4$-regular (Thoerem~\ref{BS} (d)), as we see in the table above. Then, by Proposition \ref{lem:20140113-1} we compute the 2-normality of $C\subset \mathbb P^3$
$$h^1(I_{C/\P^3}(2))=~\mid \mathcal{M}_{2}(4,I_C)\mid~=2~,$$
which also coincides with the marginal Betti number $\beta_{3,3}(R/I_C)$.
\end{Ex}

The following proposition is a crucial part of obtaining formulae in the main Theorem \ref{main}, which describes a peculiar aspect of the generation structure of the reverse lexicographic Gins of 3-regular subschemes. We prove this by exploiting Borel fixed property of Gins and our key definition, $\ND$ property.

\begin{Prop}\label{prop:20140211}
Let $X$ be a closed subscheme of codimension $e$ in $\P^n$ satisfying $\ND$ property. If $H$ is a general hyperplane in $\mathbb P^n$ and 
\[ \mathcal{M}_{n-1}(3,\Gin(I_X))=\{T_1x_{n-1}, T_2x_{n-1},\ldots, T_rx_{n-1}\}~,\]
then we have (here, $\bigsqcup$ means ``disjoint union''):
\begin{itemize}
\item[(a)] For any $T_k x_{n-1}\in\mathcal{M}_{n-1}(3,\Gin(I_X))$, $\max(T_k)\le e-1$; 
\item[(b)] For each $e-1\leq i\leq n-1$, $T_k x_i\in \mathcal{M}_{i}(3,\Gin(I_X))$.
\end{itemize}
Now, further suppose that $X$ is $3$-regular. Then, as a set of monomials we have the followings:
\begin{itemize}
\item[(c)] (degree 2 level) $\mathcal G(\Gin(I_{X\cap H/H}))_2=\mathcal G(\Gin(I_X))_2 ~\bigsqcup~ \{T_1, T_2,\ldots,T_r \}$;
\item[(d)] (degree 3 level) $\mathcal G(\Gin(I_{X\cap H/H}))_3\subset \mathcal G(\Gin(I_X))_3$; 
\item[(e)] (degree 3 level, continued) Moreover, for every $e-1\leq i \leq n-1$, we have $$\mathcal{M}_i(3,\Gin(I_X))=\mathcal{M}_i(3,\Gin(I_{X\cap H/H}))~\bigsqcup~\{T_1x_i, T_2x_i,\ldots,T_rx_i\}~.$$  In particular, we obtain
\[\mid \mathcal{M}_i(3,\Gin(I_X))\mid=\mid \mathcal{M}_i(3,\Gin(I_{X\cap H/H}))\mid +h^1(\mathcal I_{X/\P^n}(1))~.\]
\end{itemize}
\end{Prop}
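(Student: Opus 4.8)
The plan is to work throughout with the two Borel-fixed monomial ideals $J:=\Gin(I_X)\subset R$ and $\bar J:=\Gin(I_{X\cap H/H})\subset S=k[x_0,\dots,x_{n-1}]$, which by Theorem~\ref{BS}(a),(b) are related by first passing to $J|_{x_n\to0}$ and then $x_{n-1}$-saturating; in particular $\bar J=\bigcup_k\bigl(J|_{x_n\to0}:x_{n-1}^k\bigr)$ is itself $x_{n-1}$-saturated and, since $X\cap H$ is again nondegenerate by the $\ND$ property, $\bar J$ carries no linear form. I will first dispose of (a) and (b), which need only the $\ND$ hypothesis, and then treat (c)--(e) under $3$-regularity.

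For (a), I would argue by contradiction in the style of Theorem~\ref{thm:20140212-1}: if some $T_k$ had $m:=\max(T_k)\ge e$, then $T_kx_{n-1}\in J$ together with the Borel property forces $T_kx_m\in J$, and cutting by a general $\Lambda$ of dimension $m\ (\ge e)$ turns $T_kx_m$, via the iterated form of Theorem~\ref{BS}, into a linear form (or the unit) inside $\Gin(I_{X\cap\Lambda/\Lambda})$ --- contradicting that $X\cap\Lambda$ is nondegenerate and nonempty. Part (b) is then immediate: since $\max(T_k)\le e-1\le i$ and $T_kx_{n-1}\in\mathcal G(J)_3$, Remark~\ref{lem:20140127-1} propagates minimal generation down the range, giving $T_kx_i\in\mathcal M_i(3,J)$ for every $e-1\le i\le n-1$.

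The heart of (c) and (d) is the claim that a degree-$2$ (resp. degree-$3$) monomial lying in $\bar J$ but not in $J$ already enters $J$ after multiplication by a \emph{single} $x_{n-1}$. I would prove this exactly as in Proposition~\ref{lem:20140113-1}(a): first note that the $x_{n-1}$-saturatedness of $\bar J$ (or the $K_{p,1}$ theorem applied to $X\cap H$) rules out $\max=n-1$ among the minimal generators in question, and then, assuming the minimal exponent $k_0\ge2$, use that $\Gin(I_X)$ is $3$-regular (Theorem~\ref{BS}(d)) to write the element as $Mx_\ell$ with $\ell\ne n-1$ and slide $x_{n-1}$ down by the Borel property to contradict minimality of $k_0$. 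With $k_0=1$ in hand, Lemma~\ref{lem:20140127-0} identifies $\mathcal G(\bar J)_2\setminus\mathcal G(J)_2$ precisely with $\{T_1,\dots,T_r\}$, giving (c); and for (d), any $W\in\mathcal G(\bar J)_3$ lands in $J$ and cannot factor as $Nx_\ell$ with $N\in\mathcal G(J)_2$ (that would contradict minimality of $W$ in $J$), so $W\in\mathcal G(J)_3$.

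Finally, for (e) I would run a dichotomy on $Q:=P/x_i$ for a generator $P\in\mathcal M_i(3,J)$ (note $P\in\bar J$, as it avoids $x_n$): if $Q\in\bar J$ then by (c) $Q$ must be some $T_k$, so $P=T_kx_i$; if $Q\notin\bar J$ then $P$ is forced to be a minimal generator of $\bar J$, i.e. $P\in\mathcal M_i(3,\bar J)$. Together with the reverse inclusions from (b) and (d), and the disjointness $\{T_kx_i\}\cap\mathcal M_i(3,\bar J)=\varnothing$ (each $T_kx_i$ is a proper multiple of $T_k\in\bar J$), this yields the stated partition; the cardinality statement then follows since $r=h^1(\mathcal I_{X/\P^n}(1))$ by Proposition~\ref{lem:20140113-1}(a). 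I expect the main obstacle to sit in the second branch of this dichotomy at the boundary index $i=e-1$: there $\max(T_k)$ may equal $i$, so a non-minimal factorization $P=Nx_c$ in $\bar J$ need not obviously satisfy $N=Q$. The way around it is to invoke the Borel-fixed property of $\bar J$ \emph{itself} (not merely of $J$) to slide such an $N=x_ix_{c'}$ back to $Q$, forcing $Q\in\bar J$ and contradicting the hypothesis of the branch; carrying out this sliding is the one genuinely delicate bookkeeping point.
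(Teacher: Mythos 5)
Your proposal is correct and follows essentially the same route as the paper's own proof: (a) via the $\ND$-plus-Borel contradiction after cutting with a general linear space, (b) via Remark~\ref{lem:20140127-1}, (c)--(d) via the $k_0=1$ saturation argument from the proof of Proposition~\ref{lem:20140113-1}(a) combined with Lemma~\ref{lem:20140127-0} and $3$-regularity, and (e) via the divisor dichotomy against the degree-$2$ partition from (c). In fact your (e) is slightly more careful than the paper's, which passes directly from ``$T_k$ divides ${\bf x}^K$'' to ``${\bf x}^K=T_kx_i$'' (immediate only when $i>e-1$), whereas your Borel-sliding argument inside $\Gin(I_{X\cap H/H})$ correctly settles the boundary case $i=e-1$ with $\max(T_k)=e-1$; the only blemish is the parenthetical in your (d), where the factorization $W=Nx_\ell$ with $N\in\mathcal G(\Gin(I_X))_2$ contradicts minimality of $W$ in $\Gin(I_{X\cap H/H})$ (since $N$ lies there by (c)), not minimality in $\Gin(I_X)$.
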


\begin{proof}
(a) Suppose that $\max(T_k)=j\geq  e$ for some $j$. Then by Borel fixed property, we may assume that $\Gin(I_X)$ contains a monomial $x_1x_j^2$. For a general linear space $\Lambda$ of dimension $j$, it follows from Theorem~\ref{BS} (a) and (b),
\[\Gin(I_{X\cap\Lambda/\Lambda})=\left[\frac{(\Gin(I_X), x_{j+1},\ldots, x_{n})}{(x_{j+1},\ldots, x_{n})}\right]^{\sat}=\left[\frac{(\Gin(I_X), x_{j+1},\ldots, x_{n})}{(x_{j+1},\ldots, x_{n})}\right]_{x_{j}\to 1},\]
and thus $x_1\in \Gin(I_{X\cap \Lambda/\Lambda})$, which implies that $I_{X\cap \Lambda/\Lambda}$ has a linear form. This contradicts that $X$ satisfies $\ND$ property. 

\noindent (b) From Remark~\ref{lem:20140127-1}, we can say $T_k x_i\in \mathcal G(\Gin(I_X))_3$ (i.e. $T_k x_i\in \mathcal M_i(3,\Gin(I_X))$) for each $i\ge e-1$. 

\noindent (c) Since $X$ is nondegenerate, we see that $\Gin(I_X)_1=0$. Hence every monomial in $\Gin(I_X)_2$ is a minimal generator of degree $2$ (i.e. an element of $\mathcal G(\Gin(I_X))_2$). Furthermore, from the $\ND$ property we see $\Gin(I_{X\cap H/H})_1=0$, which means that every monomial in $\Gin(I_{X\cap H/H})_2$ also belongs to $\mathcal G(\Gin(I_{X\cap H/H}))_2$. Since there exists an exact sequence as follows
$$ \Gin(I_X)_1=0 \to \Gin(I_X)_2 \to \Gin(I_{X\cap H/H})_2 \to H^1(\P^n,\widetilde{\Gin(I_X)}(1))~,$$
where $\widetilde{\Gin(I_X)}$ is the sheafification of $\Gin(I_X)$, it implies that $\mathcal G(\Gin(I_X))_2\subset \mathcal G(\Gin(I_{X\cap H/H}))_2$ and the difference set $\mathcal G(\Gin(I_{X\cap H/H}))_2\setminus\mathcal G(\Gin(I_X))_2$ consists of all the monomials in 
$$\left(\frac{\Gin(I_{X\cap H/H})}{\Gin(I_X)}\right)_2=\left(\frac{(\Gin(I_X)|_{\,x_n\rightarrow0})|_{x_{n\!-\!1}\rightarrow
1}}{\Gin(I_X)|_{x_n\rightarrow0}}\right)_2=\left(\frac{\Gin({\bar{I}_X}^{sat})}{\Gin(\bar{I}_X)}\right)_2~,$$ 
where $\Gin(I_X)_2=(\Gin(I_X)|_{x_n\rightarrow0})_2$ ($\because$ by Thoerem~\ref{BS} (c), $\max({\bf x}^K)\leq n-1$ for each ${\bf x}^K\in \mathcal G(\Gin(I))$) and $\bar{I}_X$ means the ideal $\frac{I_X+(H)}{(H)}$ by abuse of notation. Now that $X$ is $3$-regular, we know that this is equal to $\{T_1, T_2,\ldots,T_r \}$ by Remark~\ref{remk:20140219} (a). Thus, as a set we establish
$$\mathcal G(\Gin(I_{X\cap H/H}))_2=\mathcal G(\Gin(I_X))_2~\bigsqcup~ \{T_1, T_2,\ldots,T_r \}.$$

\noindent (d) Because $X$ is $3$-regular (so, $\Gin(I_X)$ is also $3$-regular), the following exact sequence 
\[
\begin{array}{ccccccccccccccccccccccc}
&\Gin(I_X)_2 &\stackrel{\times x_n}{\to} &\Gin(I_X)_3 &{\to} & \Gin(I_{X\cap H,H})_3 &\to 0&=H^1(\mathbb P^n, \widetilde{\Gin(I_X)}(2))& \\[1ex]
& \quad\ds\parallel  && \quad\ds\parallel && \quad\ds\parallel  &  \\[1ex]
0 \to &\Gin(I_X)_2 &\stackrel{\times x_n}{\to} &\Gin(I_X)_3 &{\to} & \ds\left[\frac{(\Gin(I_X),x_n)}{(x_n)}\right]_3 &\to 0&  & \\[1ex]
\end{array}
\]
says that every monomial in $\Gin(I_{X\cap H,H})_3$ can be lifted to that in $\Gin(I_X)_3$.  

Assume that ${\bf x}^K\in \mathcal G(\Gin(I_{X\cap H,H}))_3$. We see at least that ${\bf x}^K$ can be considered as a monomial in $\mathcal \Gin(I_{X})_3$. If ${\bf x}^K$ is not a minimal generator of $\Gin(I_X)$, then there is a monomial $T\in \mathcal G(\Gin(I_X))_2$ such that $T$ divides ${\bf x}^K$. But, by (c) we see that $T$ is also in $\mathcal G(\Gin(I_{X\cap H,H}))_2$. This is a contradiction to the assumption that ${\bf x}^K\in \mathcal G(\Gin(I_{X\cap H,H}))_3$. Hence we conclude that $\mathcal G(\Gin(I_{X\cap H,H}))_3\subset \mathcal G(\Gin(I_X))_3$.

\noindent (e) For any $e-1\le i\le n-1$, we first know that $\{T_1x_i, T_2x_i,\ldots,T_rx_i\}\subset\mathcal{M}_i(3,\Gin(I_X))$ by (b). Combined with the result (d), we have
\[\mathcal{M}_i(3,\Gin(I_{X\cap H/H}))\bigsqcup\{T_1x_i, T_2x_i,\ldots,T_rx_i\}\subset \mathcal{M}_i(3,\Gin(I_X)).\]
Now, for the converse, choose any ${\bf x}^K\in \mathcal{M}_i(3,\Gin(I_X))$. Because $I_X$ is a saturated ideal, $\max({\bf x}^K)\leq n-1$ so that ${\bf x}^K$ can be regarded as a nonzero monomial in $((\Gin(I_X)|_{\,x_n\rightarrow0})|_{x_{n\!-\!1}\rightarrow
1})_3=\Gin(I_{X\cap H/H})_3$. If ${\bf x}^K\notin \mathcal{M}_i(3,\Gin(I_{X\cap H/H}))$, then there is a monomial $U\in \mathcal G(\Gin(I_{X\cap H/H}))_2$ such that $U$ divides ${\bf x}^K$. Since by (c) we have 
$$\mathcal G(\Gin(I_{X\cap H/H}))_2=\mathcal G(\Gin(I_X))_2 ~\bigsqcup~ \{T_1, T_2,\ldots,T_r \},$$ 
$U$ should be  contained in either $\mathcal G(\Gin(I_X))_2$ or the set $\{T_1, T_2,\ldots,T_r \}$. However, the assumption that ${\bf x}^K\in \mathcal{M}_i(3,\Gin(I_X))$ implies that $U\notin \mathcal G(\Gin(I_X))_2$. Hence $U=T_k $ for some $k$, and this conclude that ${\bf x}^K=T_k x_i \in \mathcal{M}_{i}(3,\Gin(I_X))$. Therefore, we prove that 
$$\mathcal{M}_i(3,\Gin(I_X))=\mathcal{M}_i(3,\Gin(I_{X\cap H/H}))~\bigsqcup~\{T_1 x_i, T_2 x_i,\ldots,T_r x_i\}$$ for each $e-1\leq i\leq n-1$. Further, it follows from Proposition~\ref{lem:20140113-1} (a) that  
\[
\begin{array}{llllllllll}
\mid \mathcal{M}_i(3,\Gin(I_X))\mid &=&\mid \mathcal{M}_i(3,\Gin(I_{X\cap H/H}))\mid + \mid \mathcal{M}_{n-1}(3,\Gin(I_X))\mid \\[1ex]
                                       &=&\mid \mathcal{M}_i(3,\Gin(I_{X\cap H/H}))\mid + h^1(\mathcal I_{X/\P^n}(1)),
\end{array}
\]
as we wished.
\end{proof}

\section{1-normality of general linear sections and tailing Betti numbers}\label{sect_main}

Now we are ready to prove our main result.

\begin{Thm}[Formulae for tailing Betti numbers]\label{main}
Let $X$ be a closed subscheme of codimension $e$ in $\P^n$ satisfying $\ND$ property. Suppose that $X$ is $3$-regular. Then, for any $i\geq e$ we have 
\begin{equation}\label{main_for}
\beta_{i,2}(R/I_X)=\sum_{\alpha=i}^{n} \binom{\alpha}{i}\,h^1(\mathcal I_{X\cap \Lambda^{\alpha}/\Lambda^{\alpha}}(1))~,
\end{equation}
where $\Lambda^{\alpha}$ is a general linear space of dimension $\alpha$.
\end{Thm}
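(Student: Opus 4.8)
The plan is to pass to the reverse-lexicographic generic initial ideal and reduce the whole statement to a count of degree-three minimal generators sorted by their largest variable. By Corollary~\ref{coro:20140219}, for $i\ge e$ we have $\beta_{i,2}(R/I_X)=\beta_{i,2}(R/\Gin(I_X))$, so it suffices to compute the latter. Since $\Gin(I_X)$ is Borel fixed, Theorem~\ref{EK} (Eliahou--Kervaire) with $d=2$ gives
\[
\beta_{i,2}(R/\Gin(I_X))=\sum_{T\in\mathcal{G}(\Gin(I_X))_3}\binom{\max(T)}{i-1}=\sum_{j=i-1}^{n-1}\mid\mathcal{M}_j(3,\Gin(I_X))\mid\binom{j}{i-1},
\]
where the lower limit uses $\binom{j}{i-1}=0$ for $j<i-1$ and the upper limit uses that $I_X$ is saturated, so $\max(T)\le n-1$ for every generator (Theorem~\ref{BS}(c)). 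Thus everything reduces to expressing the cardinalities $\mid\mathcal{M}_j(3,\Gin(I_X))\mid$ in terms of the $1$-normalities of the general linear sections.

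The heart of the argument is a recursion over successive general hyperplane sections. Write $X_\alpha:=X\cap\Lambda^\alpha$ for a general $\alpha$-dimensional section (so $X_n=X$ and each $X_\alpha\subset\P^\alpha$), and set $h_\alpha:=h^1(\mathcal{I}_{X_\alpha/\P^\alpha}(1))$. First I would record that every $X_\alpha$ with $\alpha\ge e$ is again $\ND$ (by the remark following Definition~\ref{def_ND1}) and again $3$-regular (regularity does not grow under a general hyperplane section, via Theorem~\ref{BS}(a),(d)), so Proposition~\ref{prop:20140211} applies to each pair $(X_\alpha,X_{\alpha-1})$. Part~(e) of that proposition then yields, for every $e-1\le i\le\alpha-1$,
\[
\mid\mathcal{M}_i(3,\Gin(I_{X_\alpha}))\mid=\mid\mathcal{M}_i(3,\Gin(I_{X_{\alpha-1}}))\mid+h_\alpha.
\]
Telescoping this from $\alpha=n$ down to $\alpha=i+1$, and using that $\mid\mathcal{M}_i(3,\Gin(I_{X_i}))\mid=0$ (since $I_{X_i}$ is saturated in $\P^i$, no generator attains $\max=i$), I obtain the closed form
\[
\mid\mathcal{M}_i(3,\Gin(I_X))\mid=\sum_{\alpha=i+1}^{n}h_\alpha\qquad(e-1\le i\le n-1).
\]

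Finally I substitute this into the Eliahou--Kervaire expression and exchange the order of summation, noting that for fixed $\alpha$ the index $j$ runs over $i-1\le j\le\alpha-1$:
\[
\beta_{i,2}(R/\Gin(I_X))=\sum_{j=i-1}^{n-1}\binom{j}{i-1}\sum_{\alpha=j+1}^{n}h_\alpha=\sum_{\alpha=i}^{n}h_\alpha\sum_{j=i-1}^{\alpha-1}\binom{j}{i-1}.
\]
The inner sum collapses by the hockey-stick identity $\sum_{j=i-1}^{\alpha-1}\binom{j}{i-1}=\binom{\alpha}{i}$, producing exactly $\sum_{\alpha=i}^{n}\binom{\alpha}{i}h_\alpha$, which is the asserted formula. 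I expect the genuine obstacle to lie not in this final combinatorics but in the structural input already isolated as Proposition~\ref{prop:20140211}(e): that cutting by a general hyperplane shifts $\mid\mathcal{M}_i(3,\Gin)\mid$ by the \emph{single} quantity $h^1(\mathcal{I}_{X/\P^n}(1))$, uniformly for all $i$ in range. Granting that proposition, the remaining care is bookkeeping---verifying that $\ND$ and $3$-regularity descend to every $X_\alpha$ so the recursion is legitimate, pinning the base case $\mid\mathcal{M}_i(3,\Gin(I_{X_i}))\mid=0$ via saturation, and tracking the index ranges $i-1\le j\le\alpha-1$, $i\le\alpha\le n$ so the hockey-stick identity applies cleanly.
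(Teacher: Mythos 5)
Your argument is correct in substance and is built from exactly the same pillars as the paper's proof: Corollary~\ref{coro:20140219} to pass to $\Gin(I_X)$, the Eliahou--Kervaire formula (Theorem~\ref{EK}) with the saturation bound $\max(T)\le n-1$, the hyperplane-section recursion of Proposition~\ref{prop:20140211}(e), and the hockey-stick identity. The only real difference is organizational: the paper runs an induction on $\dim X$, anchored at $\dim X=0$ by Proposition~\ref{lem:20140113-1}(b), and applies the identity $\sum_{k=i-1}^{n-1}\binom{k}{i-1}=\binom{n}{i}$ once per inductive step, whereas you unroll that induction into the explicit closed form $|\mathcal{M}_j(3,\Gin(I_X))|=\sum_{\alpha=j+1}^{n}h_\alpha$ and finish with a single change in the order of summation. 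Your rearrangement has the modest advantage of isolating this closed formula for the generator counts, which the paper never states explicitly.

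One step needs repair, however. For $j=e-1$ (which you cannot avoid, since the Eliahou--Kervaire sum for $\beta_{e,2}$ starts at $j=e-1$), your telescoping terminates with an application of Proposition~\ref{prop:20140211}(e) to the \emph{zero-dimensional} scheme $X_e\subset\P^e$, whose general hyperplane section $X_{e-1}$ is empty. The proposition implicitly assumes $\dim X\ge 1$: its part (c), on which the paper's proof of part (e) relies, breaks down when $X\cap H=\emptyset$, because then $I_{X\cap H/H}$ is the unit ideal and $\Gin(I_{X\cap H/H})_1\neq 0$, contrary to what the proof of (c) extracts from the $\ND$ property; and indeed the paper only ever invokes the proposition for $\dim X\ge1$. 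The equality you need at this one step, $|\mathcal{M}_{e-1}(3,\Gin(I_{X_e}))|=h_e$, is nevertheless true: it is precisely Proposition~\ref{lem:20140113-1}(a) with $d=2$, which requires only $3$-regularity and is valid for zero-dimensional schemes. So the clean fix is to anchor each telescoping one level higher, at $|\mathcal{M}_j(3,\Gin(I_{X_{j+1}}))|=h_{j+1}$ via Proposition~\ref{lem:20140113-1}(a), and to apply Proposition~\ref{prop:20140211}(e) only to $X_\alpha$ with $\alpha\ge j+2$, where $\dim X_\alpha\ge 1$. With that one-line change your proof is complete and agrees with the paper's.
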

\begin{proof}
Let $R=k[x_0,\cdots,x_n]$ and $S=R/H$ for a general hyperplane $H$. We will give a proof by induction on $\dim(X)\geq 0$. Suppose that $\dim(X)=0$. In this case, we only have to consider $i=n$. Hence we have a tautological identity
\[\sum_{\alpha=i}^{n} \binom{\alpha}{i}\,h^1(\mathcal I_{X\cap \Lambda^{\alpha}/\Lambda^{\alpha}}(1))=h^1(\mathcal I_{X/\P^n}(1))\]
and the claim follows from Proposition~\ref{lem:20140113-1} (b) which says that $\beta_{n, 2}(R/I_X)=h^1(\mathcal I_{X/\P^n}(1))$.  

Now assume that $\dim(X)\geq 1$. Note that $X\cap H$ is also 3-regular, has the same codimension, and satisfies $\ND$ property in the projective space $H\cong \P^{n-1}$. Then, for any $i\ge e$ we have 
\[
\begin{array}{lllllllllllllllll}
\beta_{i,2}(R/I_X)&=& \ds\beta_{i,2}(R/\Gin(I_X)) \quad \text{(by Corollary~\ref{coro:20140219})}\\[1ex]
                              &=& \ds \sum_{T\in{\mathcal G (\Gin(I_X))}_{3}}{\max(T) \choose i-1} \quad \text{(by Theorem~\ref{EK})}\\[1ex]
                              &=& \ds \sum_{k=i-1}^{n-1}{k \choose i-1}\mid \mathcal{M}_k(3,\Gin(I_X))\mid \quad(\because \max(T)\leq n-1 \text{ for all }\,T\in \mathcal G(\Gin(I_X))_3)\\[1ex]
                              &=& \ds \sum_{k=i-1}^{n-1}{k \choose i-1}\Big (\mid \mathcal{M}_k(3,\Gin(I_{X\cap H,H}))\mid +h^1(\mathcal I_{X/\P^n}(1))\Big) \quad \text{(by Proposition~\ref{prop:20140211} (e))}\\[1ex]
                              &=& \beta_{i,2}(S/\Gin(I_{X\cap H/H}))+ h^1(\mathcal I_{X/\P^n}(1))\ds \sum_{k=i-1}^{n-1}{k \choose i-1} \\[1ex]
                              &=& \ds \beta_{i,2}(S/I_{X\cap H/H})+ {n \choose i}h^1(\mathcal I_{X/\P^n}(1)) \quad \text{($\because$ $X\cap H$ is also 3-regular with $\ND$)}\\[1ex]
                              &=& \ds \sum_{\alpha=i}^{n-1}\binom{\alpha}{i}\,h^1(\mathcal I_{X\cap \Lambda^{\alpha}/\Lambda^{\alpha}}(1))+ {n \choose i}h^1(\mathcal I_{X/\P^n}(1)) \quad (\text{by inductive hypothesis})\\[2ex]
                              &=& \ds \sum_{\alpha=i}^{n} \binom{\alpha}{i}\,h^1(\mathcal I_{X\cap \Lambda^{\alpha}/\Lambda^{\alpha}}(1)), \\[1ex]
\end{array}
\]
as we wished.
\end{proof}

Before considering some applications of Theorem \ref{main}, we would like to mention that Formulae (\ref{main_for}) can be restated in a more visible way as follows:

\begin{Coro}\label{main_for_cor}
Let $X$ be any closed subscheme of codimension $e$ in $\P^n$ satisfying $\ND$ property. Suppose that $X$ is $3$-regular. Let $\mathbf{b}$ and $\mathbf{h}$ be two row vectors consisting of consecutive tailing Betti numbers and 1-normalities of general linear sections of $X$ as below:
\begin{align*}
&\mathbf{b}:=[\beta_{e,2}(R/I_X),\beta_{e+1,2}(R/I_X),\cdots,\beta_{n-1,2}(R/I_X),\beta_{n,2}(R/I_X)]\\
&\mathbf{h}:=[h^1(\mathcal{I}_{X\cap \Lambda^{e}/\Lambda^{e}}(1)),h^1(\mathcal{I}_{X\cap \Lambda^{e+1}/\Lambda^{e+1}}(1)),\cdots,h^1(\mathcal{I}_{X\cap \Lambda^{n-1}/\Lambda^{n-1}}(1)),h^1(\mathcal{I}_{X/\P^n}(1))]~.
\end{align*}
Then, we have
\begin{equation*}
\trm{$\mathbf{b}^{T}=\Xi(n,e)\cdot\mathbf{h}^{T}$~,}
\end{equation*}
where $\mathbf{b}^{T}$ (resp. $\mathbf{h}^{T}$) is the transpose of $\mathbf{b}^{T}$ (resp. of $\mathbf{h}^{T}$) and $\Xi(n,e)$ is the invertible $(n-e+1)\times(n-e+1)$-matrix such as
{\small\begin{align}\label{matXi_0}
\Xi(n,e)=&\left[\begin{array}{*6{c}}
\displaystyle{e\choose e}&\displaystyle{e+1\choose e}&\displaystyle{e+2\choose e}&\cdots&\displaystyle{n-1\choose e}&\displaystyle{n\choose e}\\
0&\displaystyle{e+1\choose e+1}&\displaystyle{e+2\choose e+1}&\cdots&\displaystyle{n-1\choose e+1}&\displaystyle{n\choose e+1}\\
0&0&\displaystyle{e+2\choose e+2}&\cdots&\displaystyle{n-1\choose e+2}&\displaystyle{n\choose e+2}\\
\vdots&\vdots&\vdots&\ddots&\vdots&\vdots\\
0&0&0&\cdots&\displaystyle{n-1\choose n-1}&\displaystyle{n\choose n-1}\\
0&0&0&\cdots&0&\displaystyle{n\choose n}
\end{array}\right]~.
\end{align}}
\end{Coro}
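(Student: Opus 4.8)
The plan is to recognize the claimed matrix identity as simply Theorem \ref{main} recorded simultaneously for every row index $i$ from $e$ to $n$, after which invertibility of $\Xi(n,e)$ reduces to a one-line determinant computation. I do not expect any genuine obstacle: all the mathematical content sits in Theorem \ref{main}, and the only care required is in the bookkeeping of index ranges.

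First I would fix the convention that the rows and columns of $\Xi(n,e)$ are indexed by the integers $i,\alpha\in\{e,e+1,\ldots,n\}$, so that $\Xi(n,e)$ is an $(n-e+1)\times(n-e+1)$ matrix whose $(i,\alpha)$-entry is $\binom{\alpha}{i}$, adopting the usual convention $\binom{\alpha}{i}=0$ whenever $\alpha<i$. With this indexing one checks directly against the displayed matrix (\ref{matXi_0}) that the two descriptions agree: the top row is $\binom{e}{e},\binom{e+1}{e},\ldots,\binom{n}{e}$, each subsequent row acquires one more leading zero, and the bottom row is $0,\ldots,0,\binom{n}{n}$. In particular the entry vanishes for $\alpha<i$, so $\Xi(n,e)$ is upper triangular.

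Next I would write $h_\alpha:=h^1(\mathcal I_{X\cap\Lambda^\alpha/\Lambda^\alpha}(1))$ for $e\le\alpha\le n-1$ and $h_n:=h^1(\mathcal I_{X/\P^n}(1))$, so that $\mathbf h^T$ is the column vector with entries $h_e,\ldots,h_n$, and then compute the $i$-th entry of $\Xi(n,e)\cdot\mathbf h^T$. Using the vanishing convention to raise the lower limit of summation, this entry equals
\[\sum_{\alpha=e}^{n}\binom{\alpha}{i}\,h_\alpha=\sum_{\alpha=i}^{n}\binom{\alpha}{i}\,h_\alpha,\]
which is exactly $\beta_{i,2}(R/I_X)$ by Theorem \ref{main}, i.e. the $i$-th entry of $\mathbf b^T$. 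Since this holds for every $i$ with $e\le i\le n$, the identity $\mathbf b^T=\Xi(n,e)\cdot\mathbf h^T$ follows at once.

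Finally, for invertibility I would observe that $\Xi(n,e)$ is upper triangular with diagonal entries $\binom{i}{i}=1$, so $\det\Xi(n,e)=\prod_{i=e}^{n}1=1\ne 0$; hence $\Xi(n,e)$ is invertible, and one may then also read off $\mathbf h^T=\Xi(n,e)^{-1}\cdot\mathbf b^T$. The only points demanding attention are applying $\binom{\alpha}{i}=0$ for $\alpha<i$ consistently and matching the abstract entry pattern to the explicitly displayed matrix; beyond that, the corollary is a purely formal consequence of Theorem \ref{main}.
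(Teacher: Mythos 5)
Your proposal is correct and matches the paper's treatment: the corollary is presented there as a direct restatement of Theorem \ref{main} for all row indices $i=e,\ldots,n$ at once, with invertibility following (as you note) from $\Xi(n,e)$ being upper triangular with unit diagonal, so that $\det\Xi(n,e)=1$. Your explicit bookkeeping with the convention $\binom{\alpha}{i}=0$ for $\alpha<i$ is exactly the implicit content of the paper's argument.
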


Now that $\det\Xi(n,e)=1$, there exists the inverse matrix $\Xi(n,e)^{-1}$. In fact, the inverse $\Xi(n,e)^{-1}$ is given as the form
{\small
\begin{align}\label{invmatXi}
\Xi(n,e)^{-1}=&\left[\begin{array}{*6{c}}
\displaystyle{e\choose e}&\displaystyle-{e+1\choose e}&\displaystyle{e+2\choose e}&\cdots&\displaystyle(-1)^{n-e-1}{n-1\choose e}&\displaystyle(-1)^{n-e}{n\choose e}\\
0&\displaystyle{e+1\choose e+1}&\displaystyle-{e+2\choose e+1}&\cdots&\displaystyle(-1)^{n-e-2}{n-1\choose e+1}&\displaystyle(-1)^{n-e-1}{n\choose e+1}\\
0&0&\displaystyle{e+2\choose e+2}&\cdots&\displaystyle(-1)^{n-e-3}{n-1\choose e+2}&\displaystyle(-1)^{n-e-2}{n\choose e+2}\\
\vdots&\vdots&\vdots&\ddots&\vdots&\vdots\\
0&0&0&\cdots&\displaystyle{n-1\choose n-1}&\displaystyle-{n\choose n-1}\\
0&0&0&\cdots&0&\displaystyle{n\choose n}
\end{array}\right]~.
\end{align}}

Therefore, 1-normalities of general linear sections $\mathbf{h}^{T}$ uniquely determine the tailing Betti numbers $\mathbf{b}^{T}$ and vice versa; we have exact formulae for the entries of $\mathbf{h}^{T}$ as follows:
\begin{align}\label{h_formula}
\trm{(Sectional 1-normality formula)}&\quad h^1(\mathcal I_{X\cap\Lambda^\alpha/\Lambda^\alpha}(1))=\ds\sum^{n}_{i=\alpha} (-1)^{i-\alpha}{i\choose \alpha}\beta_{i,2}(R/I_X)~,
\end{align}
where $e\le\alpha\le n$ and $\Lambda^{\alpha}$ is a general linear space of dimension $\alpha$. In particular, we could see that for some $\ell\ge e$
\begin{align*}
\trm{$\beta_{i,2}(R/I_X)=0$ for every $n\ge i\ge\ell$ $\Longleftrightarrow$ $h^1(\s I_{X\cap\Lambda^i/\Lambda^i}(1))=0$ for every $n\ge i\ge\ell$}~.
\end{align*}

Keeping in mind this correspondence, we consider the following corollaries:

\begin{Coro}\label{for_compt}
Let $X$ be any $3$-regular closed subscheme of codimension $e$ in $\P^n$ satisfying $\ND$ property. Let $\ell_0$ be the projective dimension of $R$-module $R/I_X$. Then, we have
\begin{itemize}
\item[(a)] (Rigidity on 2-regularity) The vanishing of $\beta_{e,2}(R/I_X)$ implies that $X$ is $2$-regular;
\item[(b)] (Lower bounds for tailing Betti numbers) Unless $X$ is $2$-regular, then for any $e\le i\le \ell_0$
\begin{align*}
\beta_{i,2}(R/I_X)&\ds\ge{\ell_0+1\choose i+1}~.
\end{align*}\end{itemize}
\end{Coro}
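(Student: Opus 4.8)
The plan is to convert the single equality $\beta_{e,2}(R/I_X)=0$ into the vanishing of every sectional $1$-normality and then bootstrap to $2$-regularity. Evaluating the formula of Theorem~\ref{main} at $i=e$ writes $\beta_{e,2}(R/I_X)=\sum_{\alpha=e}^{n}\binom{\alpha}{e}\,h^1(\mathcal I_{X\cap\Lambda^\alpha/\Lambda^\alpha}(1))$ as a sum of nonnegative integers with strictly positive coefficients $\binom{\alpha}{e}$, so its vanishing forces $h^1(\mathcal I_{X\cap\Lambda^\alpha/\Lambda^\alpha}(1))=0$ for all $e\le\alpha\le n$. I would then prove $2$-regularity by induction on $\dim X$. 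When $\dim X=0$ a finite scheme is $2$-regular as soon as $h^1(\mathcal I_{X/\P^n}(1))=0$, since all higher-cohomology obstructions vanish automatically for the relevant twists. For the inductive step, $X\cap H$ (with $H$ general) is again a $3$-regular, codimension-$e$ scheme with $\ND$ whose own sectional $1$-normalities form a subfamily of those of $X$ and hence all vanish; by induction $X\cap H$ is $2$-regular, and combined with $h^1(\mathcal I_{X/\P^n}(1))=0$ the standard hyperplane-restriction lifting (if $X\cap H$ is $m$-regular and $H^1(\mathcal I_X(m-1))=0$ then $X$ is $m$-regular, proved by descending induction on the twist in $0\to\mathcal I_X(\ell-1)\to\mathcal I_X(\ell)\to\mathcal I_{X\cap H/H}(\ell)\to0$) gives that $X$ is $2$-regular.

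\textbf{Plan for (b): reduction.} Write $h_\alpha:=h^1(\mathcal I_{X\cap\Lambda^\alpha/\Lambda^\alpha}(1))$. Assuming $X$ is not $2$-regular, part (a) gives $\beta_{e,2}(R/I_X)\ne0$, so some $h_\alpha\ne0$. I would first pin down $\ell_0$: the $K_{p,1}$-theorem (Theorem~\ref{thm:20140212-1}) kills the linear strand in homological degrees $>e$, while Theorem~\ref{main} shows the last nonzero entry of the quadratic strand sits at $i=\alpha_0:=\max\{\alpha:h_\alpha\ne0\}$, where $\beta_{\alpha_0,2}(R/I_X)=h_{\alpha_0}$; since $\alpha_0\ge e$, this forces $\ell_0=\alpha_0$, and in particular $h_{\ell_0}\ge1$. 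The statement then reduces to the single claim that $h_\alpha\ge1$ for every $e\le\alpha\le\ell_0$. Granting it, Theorem~\ref{main} together with the hockey-stick identity $\sum_{\alpha=i}^{\ell_0}\binom{\alpha}{i}=\binom{\ell_0+1}{i+1}$ yields, for $e\le i\le\ell_0$,
\[
\beta_{i,2}(R/I_X)=\sum_{\alpha=i}^{\ell_0}\binom{\alpha}{i}h_\alpha\ \ge\ \sum_{\alpha=i}^{\ell_0}\binom{\alpha}{i}=\binom{\ell_0+1}{i+1}.
\]

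\textbf{Plan for (b): the crux and the main obstacle.} The heart of the matter is thus the downward non-vanishing $h_\alpha\ge1$, which I would derive from the \emph{monotonicity} $h_\alpha\le h_{\alpha-1}$ of the $1$-normality defect under a general hyperplane section. Writing $X_\beta=X\cap\Lambda^\beta$, the restriction sequence $0\to\mathcal I_{X_\alpha}\to\mathcal I_{X_\alpha}(1)\to\mathcal I_{X_{\alpha-1}/H}(1)\to0$ has cohomology containing $H^1(\mathcal I_{X_\alpha}(0))\to H^1(\mathcal I_{X_\alpha}(1))\to H^1(\mathcal I_{X_{\alpha-1}/H}(1))$, so as soon as $h^1(\mathcal I_{X_\alpha}(0))=0$ the middle map is injective and $h_\alpha\le h_{\alpha-1}$; chaining this from $\alpha=\ell_0$ down to $\alpha=e+1$ propagates $h_{\ell_0}\ge1$ to all of $e\le\alpha\le\ell_0$. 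The entire argument therefore rests on the vanishing $h^1(\mathcal I_{X_\alpha}(0))=h^0(\mathcal O_{X_\alpha})-1=0$ for the positive-dimensional sections $X_{e+1},\dots,X_{\ell_0}$, i.e. on the arithmetic connectedness of these sections, and this is the step I expect to be the main obstacle. It holds whenever $X$—and hence its positive-dimensional general sections—is connected, which is precisely the setting of Corollary~\ref{app_hilb}; the connectedness input is moreover essential, as the estimate can fail when $h^1(\mathcal I_{X_\alpha}(0))\ne0$ (for a plane conic together with a disjoint line in $\P^3$ one computes $\ell_0=3$, $h_3=1$, $h_2=0$, so $\beta_{2,2}(R/I_X)=3<\binom{4}{3}$). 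Securing this connectivity—equivalently, controlling the comparison map on $H^1(\mathcal I_{X_\alpha}(0))$ at every section level—is exactly what I would need to establish to complete the proof of (b).
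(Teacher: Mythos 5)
Your plan for (a) is correct and amounts to the paper's own argument, organized slightly differently: the paper also evaluates Theorem~\ref{main} at $i=e$, but instead of extracting $h^1(\mathcal I_{X\cap\Lambda^\alpha/\Lambda^\alpha}(1))=0$ for every $\alpha$ and then lifting $2$-regularity by induction on dimension via the standard restriction lemma, it chains the restriction-sequence inequalities (valid by $3$-regularity) to bound $\beta_{e,2}(R/I_X)$ from below by a positive combination of the groups $h^{i}(\mathcal I_{X/\P^n}(2-i))$, $1\le i\le\dim X+1$, whose vanishing is exactly $2$-regularity. Both routes use the same two ingredients and are sound.

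For (b), writing $h_\alpha=h^1(\mathcal I_{X\cap\Lambda^\alpha/\Lambda^\alpha}(1))$ as you do, your reduction is literally the paper's proof: identify $\ell_0$ with $\max\{\alpha: h_\alpha\neq 0\}$ (via Theorem~\ref{thm:20140212-1} and Theorem~\ref{main}), note $h_{\ell_0}=\beta_{\ell_0,2}(R/I_X)\ge 1$, and finish with the hockey-stick identity, so that everything rests on $h_\alpha\ge 1$ for $e\le\alpha\le\ell_0$. The crucial difference is that the paper simply \emph{asserts} this step (``because $h^1(\mathcal I_{X\cap\Lambda^{\alpha}/\Lambda^{\alpha}}(1))\ge 1$ for $i\le\alpha\le\ell_0$'') with no justification beyond the case $\alpha=\ell_0$, whereas you correctly isolate it, see that it needs the monotonicity $h_\alpha\le h_{\alpha-1}$, and see that this monotonicity needs $h^1(\mathcal I_{X\cap\Lambda^\alpha/\Lambda^\alpha})=0$, i.e.\ arithmetic connectedness of the general sections, which does not follow from $\ND$ plus $3$-regularity. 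Your example then settles the matter against the paper. For $X=C\sqcup L\subset\P^3$, a plane conic together with a disjoint line: since $C$ and $L$ are ACM, the sequence $0\to\mathcal I_X\to\mathcal I_C\oplus\mathcal I_L\to\mathcal O_{\P^3}\to 0$ gives $h^1(\mathcal I_X(k))=\dim_k\bigl(R/(I_C+I_L)\bigr)_k$, and $R/(I_C+I_L)$ has Hilbert function $(1,1,0,\dots)$; hence $h^1(\mathcal I_X(1))=1$ (so $X$ is not $2$-regular), $h^1(\mathcal I_X(2))=0$, and the same sequence kills $H^2(\mathcal I_X(1))$ and $H^3(\mathcal I_X)$, so $X$ is $3$-regular. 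A general plane section is three non-collinear points, so $X$ satisfies $\ND$ with $h_2=0$, while $h_3=1$. Theorem~\ref{main} (confirmed independently by a Hilbert-series computation together with Theorem~\ref{thm:20140212-1}) gives $\beta_{2,2}(R/I_X)=3$ and $\beta_{3,2}(R/I_X)=1$, hence $\ell_0=3$ and $\beta_{2,2}(R/I_X)=3<\binom{4}{3}$, violating (b). So Corollary~\ref{for_compt}(b) is false as stated: the obstacle you flag is not a gap in your approach but a genuine error in the paper, and the statement must be supplemented by a hypothesis forcing $h^0(\mathcal O_{X\cap\Lambda^\alpha})=1$ for $e+1\le\alpha\le\ell_0$. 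One caution about your proposed repair: for reducible schemes, connectedness of $X$ alone does not pass to general linear sections (the appeal to Bertini in Corollary~\ref{app_hilb} has the same subtlety), so the corrected hypothesis is cleanest when imposed on the sections themselves or when $X$ is irreducible.
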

\begin{proof} (a) First, note that for any $\alpha\ge e$ and $i\ge 0$ we have 
\[H^i(\mathcal I_{X\cap\Lambda^{\alpha}/\Lambda^{\alpha}}(2-i))\to H^{i+1}(\mathcal I_{X\cap\Lambda^{\alpha+1}/\Lambda^{\alpha+1}}(1-i))\to H^{i+1}(\mathcal I_{X\cap\Lambda^{\alpha+1}/\Lambda^{\alpha+1}}(2-i))=0~,\]
for $X\cap\Lambda^{\alpha+1}$ is also $3$-regular. Thus, 
\begin{align}\label{coh_ineq}
 h^i(\mathcal I_{X\cap\Lambda^{\alpha}/\Lambda^{\alpha}}(2-i))&\ge h^{i+1}(\mathcal I_{X\cap\Lambda^{\alpha}/\Lambda^{\alpha}}(2-(i+1)))\\\nonumber
&\ge\cdots\\\nonumber
&\ge h^{i+n-\alpha}(\mathcal I_{X/\P^{n}}(2-(i+n-\alpha)))~.
\end{align}

By Formula (\ref{main_for}) and inequality (\ref{coh_ineq}), we know that
\[\ds\beta_{e,2}(R/I_X)\ge\sum^{n}_{\alpha=e} {\alpha\choose e}h^{1+n-\alpha}(\mathcal I_{X/\P^n}(2-(1+n-\alpha)))~,\]
a positive combination of cohomologies of some twists of the ideal sheaf $\mathcal I_X$. Hence, the vanishing $\beta_{e,2}(R/I_X)=0$ forces that
\[H^i(\mathcal I_{X/\P^n}(2-i))=0~\trm{for all $i=1,\cdots,\dim X+1$~,}\]
which implies $2$-regularity of $X$.

(b) By definition, it is obvious that $\beta_{\ell_0,2}(R/I_X)\neq 0$ and $\beta_{i,2}(R/I_X)=0$ for any $i\ge \ell_0+1$. So does $h^1(\mathcal{I}_{X\cap \Lambda^{i}/\Lambda^{i}}(1))$. Then, from Formula (\ref{main_for}) we have that for any $e\le i\le \ell_0$,
\[\ds\beta_{i,2}(R/I_X)=\sum_{\alpha=i}^{\ell_0} \binom{\alpha}{i}\,h^1(\mathcal I_{X\cap \Lambda^{\alpha}/\Lambda^{\alpha}}(1))\ge\sum^{\ell_0}_{\alpha=i} {\alpha\choose i}={\ell_0+1\choose i+1}~,\]
because $h^1(\mathcal I_{X\cap \Lambda^{\alpha}/\Lambda^{\alpha}}(1))\ge1$ for $i\le\alpha\le \ell_0$.
\end{proof}

\begin{Remk} The lower bound in Corollary \ref{for_compt} (b) is sharp: see Example \ref{final_ex} (c).
\end{Remk}

The tailing Betti numbers determine Hilbert polynomial of $X\subset\P^n$ precisely, so that we can read off some intrinsic invariants of $X$ such as \ti{arithmetic genus} in case of $X$ being a curve or \ti{irregularity} in case of $X$ being a surface from these Betti numbers.

Let us recall some basic facts. Say $r=\dim X$. It is well-known that there exist unique integers $\chi_r(X,\s O_X(1)),\cdots,\chi_0(X,\s O_X(1))$, which determine Hilbert polynomial of $X\subset\P^n$
\begin{equation}\label{hp_exp}P_X(t)=\sum^r_{j=0} \chi_j(X,\s O_X(1)){t+j-1\choose j}\quad\trm{or briefly, $\quad\sum^r_{j=0} \chi_j(X){t+j-1\choose j}$}\end{equation}
and that $\chi_j(X,\s O_X(1))=\chi_{j-1}(X\cap H,\s O_{X\cap H}(1))$ for each $1\le j\le r$, because we have the relation $\chi(\s O_X(t))-\chi(\s O_X(t-1))=\chi(\s O_{X\cap H}(t))$ from the exact sequence
\[0\to\s O_X(-1)\st{\cdot H}\s O_X\to \s O_{X\cap H}\to 0\quad\trm{($H$: a general hyperplane of $\P^n$)}~.\]
Thus, the determination of $P_X(t)$ is equivalent to decide all the $\chi_r(X),\cdots,\chi_0(X)$ in (\ref{hp_exp}).

\begin{Coro}\label{app_hilb}
Let $X$ be any $3$-regular $r$-dimensional connected algebraic set in $\P^n$ satisfying $\ND$ property. Set $e=n-r$, the codimension. Then, the tailing Betti numbers $\beta_{e,2}(R/I_X),\cdots,\beta_{n,2}(R/I_X)$ completely determine $P_X(t)$, Hilbert polynomial of $X\subset\P^n$ as follows:
\begin{align}\nonumber
P_X(t)&=\ds\left\{e+1+\sum^n_{i=e}(-1)^{i-e}{i\choose e}\beta_{i,2}(R/I_X)\right\}{t+r-1\choose r}\\
&+\sum^{r-1}_{i=0}\left[\left\{1-\beta_{n-i-1,2}(R/I_X)+\sum^n_{j=n-i}(-1)^{j-n+i}{j+1\choose n-i}\beta_{j,2}(R/I_X)\right\}{t+i-1\choose i}\right]~.
\end{align}
In particular, the degree of $X\subset\P^n$ can be given by
\begin{align}
e+1+\sum^n_{i=e}(-1)^{i-e}{i\choose e}\beta_{i,2}(R/I_X)
\end{align}
and the arithmetic genus $p_a(X)=(-1)^r(\chi(\s O_X)-1)$ can be computed by 
\begin{align}
(-1)^r\left\{(n+1)\beta_{n,2}(R/I_X)-\beta_{n-1,2}(R/I_X)\right\}~.
\end{align}
\end{Coro}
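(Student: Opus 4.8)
The plan is to pin down each coefficient $\chi_j(X)$ in the expansion (\ref{hp_exp}) one at a time. The relation $\chi_j(X)=\chi_{j-1}(X\cap H)$ recalled just before the statement, applied $j$ times, gives $\chi_j(X)=\chi_0(X\cap\Lambda^{n-j})=\chi(\s O_{X\cap\Lambda^{n-j}})$. Writing $Y_\alpha:=X\cap\Lambda^\alpha$ for a general linear section of dimension $\alpha$ (which is again nondegenerate, $3$-regular, and satisfies $\ND$ property), the whole statement thus reduces to expressing $\chi(\s O_{Y_\alpha})$ for each $e\le\alpha\le n$ in terms of the tailing Betti numbers, and then feeding in the sectional $1$-normality formula (\ref{h_formula}).

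For the leading coefficient $\chi_r(X)=\chi(\s O_{Y_e})$ the section $Y_e$ is $0$-dimensional, so $\chi(\s O_{Y_e})=h^0(\s O_{Y_e}(1))=\deg X$. Comparing the exact sequence $0\to H^0(\s I_{Y_e/\P^e}(1))\to H^0(\s O_{\P^e}(1))\to H^0(\s O_{Y_e}(1))\to H^1(\s I_{Y_e/\P^e}(1))\to 0$ with $H^0(\s I_{Y_e/\P^e}(1))=0$ (nondegeneracy) yields $\deg X=e+1+h^1(\s I_{Y_e/\P^e}(1))$, and the $\alpha=e$ case of (\ref{h_formula}) turns this into the asserted coefficient of $\binom{t+r-1}{r}$, i.e.\ the degree.

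For $e+1\le\alpha\le n$ I would proceed in two cohomological steps. First, $3$-regularity forces $H^i(\s I_{Y_\alpha}(0))=0$ for $i\ge 3$, while $H^0(\s I_{Y_\alpha}(0))=0$ trivially; hence the twist-$0$ ideal-sheaf sequence gives $\chi(\s O_{Y_\alpha})=1+h^1(\s I_{Y_\alpha}(0))-h^2(\s I_{Y_\alpha}(0))$. Second, for a general hyperplane $H'\subset\Lambda^\alpha$ one has $Y_{\alpha-1}=Y_\alpha\cap H'$ and $\s I_{Y_\alpha}|_{H'}=\s I_{Y_{\alpha-1}/H'}$, so the restriction sequence $0\to\s I_{Y_\alpha}(0)\to\s I_{Y_\alpha}(1)\to\s I_{Y_{\alpha-1}/H'}(1)\to 0$ produces the four-term exact sequence $0\to H^1(\s I_{Y_\alpha}(0))\to H^1(\s I_{Y_\alpha}(1))\to H^1(\s I_{Y_{\alpha-1}/H'}(1))\to H^2(\s I_{Y_\alpha}(0))\to 0$: the left-hand zero uses $H^0(\s I_{Y_{\alpha-1}/H'}(1))=0$, which is nondegeneracy of $Y_{\alpha-1}$ (valid since $\alpha-1\ge e$, by $\ND$), and the right-hand zero uses $H^2(\s I_{Y_\alpha}(1))=0$ from $3$-regularity. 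Its alternating sum of dimensions gives $h^1(\s I_{Y_\alpha}(0))-h^2(\s I_{Y_\alpha}(0))=h^1(\s I_{Y_\alpha}(1))-h^1(\s I_{Y_{\alpha-1}/H'}(1))$, whence $\chi(\s O_{Y_\alpha})=1+h^1(\s I_{Y_\alpha}(1))-h^1(\s I_{Y_{\alpha-1}}(1))$.

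Finally I would substitute both $1$-normalities from (\ref{h_formula}) and collapse the binomial sums with Pascal's rule $\binom{j+1}{\alpha}=\binom{j}{\alpha}+\binom{j}{\alpha-1}$; separating off the $j=\alpha-1$ term of $h^1(\s I_{Y_{\alpha-1}}(1))$ then gives precisely $\chi(\s O_{Y_\alpha})=1-\beta_{\alpha-1,2}(R/I_X)+\sum_{j=\alpha}^n(-1)^{j-\alpha}\binom{j+1}{\alpha}\beta_{j,2}(R/I_X)$, which with $\alpha=n-i$ is the claimed coefficient of $\binom{t+i-1}{i}$. The degree statement is the $\alpha=e$ instance above, and the arithmetic genus (and, for surfaces, the irregularity) follows from the $\alpha=n$ case, where $Y_n=X$ and $\chi_0(X)=\chi(\s O_X)=1-\beta_{n-1,2}(R/I_X)+(n+1)\beta_{n,2}(R/I_X)$, inserted into $p_a(X)=(-1)^r(\chi(\s O_X)-1)$ (the connectedness of the reduced $X$ ensuring $h^0(\s O_X)=1$, so that $p_a$ is the classical arithmetic genus). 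I expect the cohomological bookkeeping of the middle step to be the main obstacle: one must check that exactly $H^1,H^2$ of $\s I_{Y_\alpha}(0)$ and only $H^1$ of the relevant sections survive, and that $\s I_{Y_\alpha}|_{H'}=\s I_{Y_{\alpha-1}/H'}$ for general $H'$ --- precisely the points where $3$-regularity and the $\ND$ property must be invoked carefully --- after which the binomial manipulation is purely routine.
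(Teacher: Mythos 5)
Your proposal is correct and is essentially the paper's own proof: both arguments expand $P_X(t)$ in the binomial basis, identify $\chi_j(X)=\chi(\s O_{X\cap\Lambda^{n-j}})$, obtain the top coefficient from nondegeneracy of the zero-dimensional section $X\cap\Lambda^{e}$, convert the difference of consecutive sectional $1$-normalities into the needed Euler characteristics via the restriction sequence together with the $3$-regularity vanishings, and finish by substituting the sectional $1$-normality formula (\ref{h_formula}) and collapsing with Pascal's rule. The only divergence is in one bookkeeping step: where you carry $h^1(\s I_{X\cap\Lambda^{\alpha}})$ along and eliminate it through the alternating sum of your four-term sequence (whose left-hand zero comes from $\ND$-nondegeneracy of $X\cap\Lambda^{\alpha-1}$), the paper instead kills that group outright by Bertini --- a general linear section of the connected algebraic set is again connected and reduced, so $h^0(\s O_{X\cap\Lambda^{\alpha}})=1$ and $h^1(\s I_{X\cap\Lambda^{\alpha}})=0$ --- and then reads off $\chi(\s O_{X\cap\Lambda^{\alpha}})=1-h^2(\s I_{X\cap\Lambda^{\alpha}})$ directly; your variant is marginally more self-contained, needing connectedness only to interpret $p_a(X)$ classically at the very end.
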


\begin{proof}
First of all, we recall \ti{sectional 1-normality formula} (\ref{h_formula}): for each $\alpha$ $(e\le\alpha\le n)$
\begin{align*}
h^1(\mathcal I_{X\cap\Lambda^\alpha/\Lambda^\alpha}(1))&=\ds\sum^{n}_{j=\alpha} (-1)^{j-\alpha}{j\choose \alpha}\beta_{j,2}(R/I_X)~,
\end{align*}
where $\Lambda^{\alpha}$ is a general linear space of dimension $\alpha$. To prove this assertion, it is enough to to decide all the coefficients $\chi_r(X),\cdots,\chi_0(X)$ with $\beta_{e,2}(R/I_X),\cdots,\beta_{n,2}(R/I_X)$.

For the top coefficient $\chi_r(X)$, we see that
\begin{align*}
\chi_r(X)&=\chi_0(X\cap\Lambda^e)=P_{X\cap\Lambda^e}(0)=\chi(\s O_{X\cap\Lambda^e})&\\
&=h^0(\s O_{X\cap\Lambda^e})=h^0(\s O_{X\cap\Lambda^e}(1))=h^0(\s O_{\Lambda^e}(1))+h^1(\s I_{X\cap\Lambda^e}(1))&\trm{by (\ref{h_formula})}\\
&\ds=e+1+\sum^n_{i=e}(-1)^{i-e}{i\choose e}\beta_{i,2}(R/I_X)~.&
\end{align*}
Next, for any $0\le i<r$, note that first $X\cap\Lambda^{n-i}$ is still connected and reduced by Bertini theorem so that $h^0(\s O_{X\cap\Lambda^{n-i}})=1,~h^1(\s I_{X\cap\Lambda^{n-i}})=0$ and that secondly $X\cap\Lambda^{n-i}$ is also $d$-regular for all $d\ge 3$. Thus, for any $0\le i<r$ we have 
\[0=H^1(\s I_{X\cap\Lambda^{n-i}})\to H^1(\s I_{X\cap\Lambda^{n-i}}(1))\to H^1(\s I_{X\cap\Lambda^{n-i-1}}(1))\to H^2(\s I_{X\cap\Lambda^{n-i}})\to H^2(\s I_{X\cap\Lambda^{n-i}}(1))=0~.\]

Using this, we calculate $\chi_i(X)$ as
\begin{align*}
\chi_i(X)&=\chi_0(X\cap\Lambda^{n-i})=\chi(\s O_{X\cap\Lambda^{n-1}})=h^0(\s O_{X\cap\Lambda^{n-i}})-h^1(\s O_{X\cap\Lambda^{n-i}})=1-h^2(\s I_{X\cap\Lambda^{n-i}})\\
&\ds=1-\left\{h^1(\s I_{X\cap\Lambda^{n-i-1}}(1))-h^1(\s I_{X\cap\Lambda^{n-i}}(1))\right\}\quad\trm{by (\ref{h_formula})}\\
&\ds=1-\left\{\sum^n_{j=n-i-1}(-1)^{j-n+i+1}{j\choose n-i-1}\beta_{j,2}(R/I_X)-\sum^n_{j=n-i}(-1)^{j-n+i}{j\choose n-i}\beta_{j,2}(R/I_X)\right\}\\
&\ds=1-\beta_{n-i-1,2}(R/I_X)+\sum^n_{j=n-i}(-1)^{j-n+i}{j+1\choose n-i}\beta_{j,2}(R/I_X)~.
\end{align*}

\end{proof}

In addition, we could express or bound the cohomologies $H^i(\mathcal I_{X/\P^n}(2-i))$ for $1\le i\le \dim(X)+1$, which is a measure on the \ti{deviation} of $X$ from $2$-regularity, with the tailing Betti numbers. For instance, for a small $i\le 3$, we can get simple formulae or bounds as below:

\begin{Coro}\label{app_coh}
Let $X$ be any $3$-regular $r$-dimensional connected algebraic set in $\P^n$ satisfying $\ND$ property. Set $e=n-r$, the codimension. Then, for given tailing Betti numbers $\beta_{e,2}(R/I_X),\cdots,\beta_{n,2}(R/I_X)$ we know that
\begin{itemize}
\item[(a)] $h^1(\mathcal I_{X/\P^n}(1))=\beta_{n,2}(R/I_X)$;
\item[(b)] $h^2(\mathcal I_{X/\P^n})=\beta_{n-1,2}(R/I_X)-(n+1)\beta_{n,2}(R/I_X)$;
\item[(c)] $h^3(\mathcal I_{X/\P^n}(-1))\ge\beta_{n-2,2}(R/I_X)-(n+1)\beta_{n-1,2}(R/I_X)+{n+2\choose 2}\beta_{n,2}(R/I_X)$ and

 \noindent $h^3(\mathcal I_{X/\P^n}(-1))\le\beta_{n-2,2}(R/I_X)-n\beta_{n-1,2}(R/I_X)+{n+1\choose 2}\beta_{n,2}(R/I_X)$;
\end{itemize}
\end{Coro}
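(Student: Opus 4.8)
\emph{Overall strategy.} The engine for all three parts is the same: restrict to a general hyperplane $H\subset\P^n$ via
\[0\to\mathcal I_{X/\P^n}(m-1)\to\mathcal I_{X/\P^n}(m)\to\mathcal I_{X\cap H/H}(m)\to 0~,\]
take the resulting long exact cohomology sequence, kill the outer terms using $3$-regularity together with the connectedness and reducedness of $X$, and finally read off the sectional groups $h^1(\mathcal I_{X\cap\Lambda^\alpha/\Lambda^\alpha}(1))$ from the \emph{sectional $1$-normality formula}~(\ref{h_formula}). Part (a) is immediate: it is exactly Proposition~\ref{lem:20140113-1}~(b) for $d=2$ (applicable since $X$ is $3$-regular), or equivalently formula~(\ref{h_formula}) at $\alpha=n$, where the sum collapses to the single term $\binom{n}{n}\beta_{n,2}(R/I_X)$.

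\emph{Part (b).} I would take the $m=1$ twist. Because $X$ is connected and reduced we have $H^1(\mathcal I_{X/\P^n})=0$, so multiplication by $H$ has zero image on $H^1$ and the restriction $H^1(\mathcal I_{X/\P^n}(1))\to H^1(\mathcal I_{X\cap H/H}(1))$ is injective; meanwhile $3$-regularity gives $H^2(\mathcal I_{X/\P^n}(1))=0$. The sequence therefore degenerates to the identity
\[h^2(\mathcal I_{X/\P^n})=h^1(\mathcal I_{X\cap H/H}(1))-h^1(\mathcal I_{X/\P^n}(1))~.\]
Substituting $h^1(\mathcal I_{X\cap\Lambda^{n-1}/\Lambda^{n-1}}(1))=\beta_{n-1,2}-n\beta_{n,2}$ and $h^1(\mathcal I_{X/\P^n}(1))=\beta_{n,2}$ from (\ref{h_formula}) and cancelling yields $\beta_{n-1,2}-(n+1)\beta_{n,2}$.

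\emph{Part (c).} Here I would use the $m=0$ twist. Now $3$-regularity forces $H^3(\mathcal I_{X/\P^n})=0$, so the connecting map $H^2(\mathcal I_{X\cap H/H})\to H^3(\mathcal I_{X/\P^n}(-1))$ is \emph{surjective} with kernel the image of the restriction $g\colon H^2(\mathcal I_{X/\P^n})\to H^2(\mathcal I_{X\cap H/H})$; hence $h^3(\mathcal I_{X/\P^n}(-1))=h^2(\mathcal I_{X\cap H/H})-\dim\,\im(g)$. Writing $c=h^2(\mathcal I_{X\cap H/H})$ and $b=h^2(\mathcal I_{X/\P^n})$ and using $0\le\dim\,\im(g)\le b$ confines $h^3(\mathcal I_{X/\P^n}(-1))$ to the interval $[\,c-b,\;c\,]$. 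I would compute $c$ by running the argument of (b) one dimension down: since $X\cap H$ is again connected, reduced and $3$-regular in $H\cong\P^{n-1}$, the same degeneration gives $c=h^1(\mathcal I_{X\cap\Lambda^{n-2}/\Lambda^{n-2}}(1))-h^1(\mathcal I_{X\cap\Lambda^{n-1}/\Lambda^{n-1}}(1))$, which after substituting (\ref{h_formula}) equals $\beta_{n-2,2}-n\beta_{n-1,2}+\binom{n+1}{2}\beta_{n,2}$; this is the upper bound. Subtracting $b$ (from (b)) and using $\binom{n+1}{2}+(n+1)=\binom{n+2}{2}$ produces the lower bound $\beta_{n-2,2}-(n+1)\beta_{n-1,2}+\binom{n+2}{2}\beta_{n,2}$.

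\emph{Where the difficulty sits.} The binomial bookkeeping ($\binom{n}{2}+n=\binom{n+1}{2}$ and $\binom{n+1}{2}+(n+1)=\binom{n+2}{2}$) is routine. The one genuinely delicate point is (c): in contrast to (a)--(b), the restriction map $g$ on $H^2$ carries no a priori controlled rank, which is precisely why (c) must be a two-sided estimate rather than an equality --- the extreme cases $\im(g)=0$ and $g$ injective realize the upper and lower bounds respectively. I would also flag the standing hypothesis $r=\dim X\ge 2$, which is what makes $H^3(\mathcal I_{X/\P^n}(-1))$ a meaningful deviation-from-$2$-regularity term and keeps the sections $X\cap\Lambda^{n-1}$ (used to compute $c$) connected, so that the vanishing $H^1(\mathcal I_{X\cap\Lambda^{n-1}/\Lambda^{n-1}})=0$ remains available.
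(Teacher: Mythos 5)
Your proof is correct and follows essentially the same route as the paper: part (a) via Proposition~\ref{lem:20140113-1}~(b), and parts (b)--(c) via the hyperplane-restriction long exact sequences with the vanishings $H^1(\mathcal I_{X/\P^n})=0$ (connectedness and reducedness) and $H^i(\mathcal I_{X/\P^n}(3-i))=0$ ($3$-regularity), followed by substitution of the sectional $1$-normality formula~(\ref{h_formula}) --- exactly the identities the paper imports from the proof of Corollary~\ref{app_hilb}. Your rank analysis of the restriction map $g$ on $H^2$ is the same mechanism behind the paper's two-sided bound in (c), and your version even states the lower bound cleanly where the paper's displayed inequality has a small typo ($h^1$ where $h^2(\mathcal I_{X\cap\Lambda^{n-1}/\Lambda^{n-1}})$ is meant).
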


\begin{proof} (a) it comes from part (b) of Proposition \ref{lem:20140113-1}. For (b) and (c), from the proof of Corollary \ref{app_hilb} we know that
\begin{align*}
&h^2(\s I_{X/\P^n})=h^1(\s I_{X\cap\Lambda^{n-1}/\Lambda^{n-1}}(1))-h^1(\s I_{X/\P^n}(1))~,\\
&h^2(\s I_{X\cap\Lambda^{n-1}/\Lambda^{n-1}})=h^1(\s I_{X\cap \Lambda^{n-2}/\Lambda^{n-2}}(1))-h^1(\s I_{X\cap\Lambda^{n-1}/\Lambda^{n-1}}(1))~.
\end{align*}
So, (b) is straightforward by Formula (\ref{h_formula}). For $h^3(\mathcal I_{X/\P^n}(-1))$, we have a similar sequence as
\[H^2(\s I_{X/\P^n}(-1))\to H^2(\s I_{X/\P^n})\to H^2(\s I_{X\cap\Lambda^{n-1}/\Lambda^{n-1}})\to H^3(\s I_{X/\P^n}(-1))\to H^3(\s I_{X/\P^n})=0~,\]
since $X$ is $3$-regular. Thus, we see that
\[h^1(\s I_{X\cap\Lambda^{n-1}/\Lambda^{n-1}})-h^2(\s I_{X/\P^n})\le h^3(\s I_{X/\P^n}(-1))\le h^2(\s I_{X\cap\Lambda^{n-1}/\Lambda^{n-1}})~,\]
so that lower and upper bounds in (c) can be obtained by Formula (\ref{h_formula}).
\end{proof}

\begin{Remk} Both lower and upper bounds for $h^3(\mathcal I_{X/\P^n}(-1))$ in Corollary \ref{app_coh} (c) are sharp. See Example \ref{final_ex} (b) for an optimal case of the lower bound and Example \ref{final_ex} (c) for the upper bound.
\end{Remk}

\begin{Ex}[with \texttt{Macaulay 2}]\label{final_ex} We check our results with some examples;
\begin{itemize}
\item[(a)] Let $\widetilde{C}$ be a rational normal curve in  $\P^{13}$ and $S^{m}(\widetilde{C})$ be the $m$-th higher secant variety of dimension $\min\{2m-1,13\}$. Now choose four points $q_1, q_2, q_3, q_4$ such that
\[q_1\in \P^{13} \setminus S^{6}(\widetilde{C}), \quad q_{2}, q_{3} \in S^{6}(\widetilde{C}) \setminus  S^{5}(\widetilde{C}), \quad q_4\in S^{5}(\widetilde{C}) \setminus  S^{4}(\widetilde{C})\] 
If we let $\Sigma =\langle q_1, q_2 , q_3, q_4\rangle$, then we know that $C=\pi_{\Sigma}(\widetilde{C})\subset \P^{9}$ is a smooth rational curve of degree 13 ($e=8$) and, using \texttt{Macaulay 2}, we can compute its $3$-regular Betti table
\[{\small\texttt{
\begin{tabular}{l|cccccccccccccccccccccccccccccc}
       & 0 & 1   & 2     & 3     & 4     & 5     & 6    & 7   & 8  & 9  \\[1ex]
\hline
    0 & 1 & .    & .      & .      & .      & .      &   .     & .      &  .     &  .    \\[1ex]
    1 & .  & 28 & 103 & 161 & 134 & 50   &   6    & .      &  .     &  .   \\[1ex]
    2 & .  &3  &  39  & 190 & 414 & 518 & 385  & 168 & \tbf{40}   & \tbf{4} \\[1ex]
    \end{tabular}
}}~,\]
where we denote the tailing Betti numbers as \tbf{boldface}. Now, we verify our theorems by this example. Since $\pi_{\Sigma}$ is an isomorphic projection from $\P^3$, we get that 1-normality $h^1(\mathcal I_{C/\P^9}(1))=4$ and $h^1(\mathcal I_{C\cap H/H} (1))=h^1(\mathcal O_{C\cap H} (1))-h^1(\mathcal O_{H} (1))=13-9=4$. Then, by Corollary \ref{main_for_cor} we can see that 
\[
\left[\begin{array}{cc}
\beta_{8,2}\\
\beta_{9,2}
\end{array}\right]
=\Xi(9,8)\cdot\left[\begin{array}{cc}
h^1(\mathcal I_{C\cap H/H} (1)) \\
h^1(\mathcal I_{C/\P^9}(1))
\end{array}\right]
=\left[\begin{array}{cc}
1&9 \\
0&1
\end{array}\right]\left[\begin{array}{cc}
4 \\
4
\end{array}\right]
=\left[\begin{array}{cc}
40\\
4
\end{array}\right]
~,\]
which coincides with tailing Betti numbers in the table above. Moreover, $\deg(C)$ can also be read off from $8+1+{8\choose 8}\beta_{8,2}-{9\choose 8}\beta_{9,2}=13$ and the (arithmetic) genus of $C$ by $\beta_{8,2}-{9+1\choose 9}\beta_{9,2}=0$ as Corollary \ref{app_hilb} says.

\item[(b)] Let $S$ be a smooth surface in $\P^8$ ($e=6$), which is Segre embedding of $C_1\times C_2$ where $C_1$ is a randomly chosen plane conic (i.e. smooth rational) curve and $C_2$ is a random plane cubic (i.e. elliptic) curve using \texttt{Macaulay 2}. The Betti table of $S$ is $3$-regualr and is given by
\[{\small\texttt{
\begin{tabular}{l|cccccccccccccccccccccccccccccc}
       & 0 & 1   & 2     & 3     & 4     & 5     & 6    & 7   & 8   \\[1ex]
\hline
    0 & 1 & .    & .      & .      & .      & .      & .  & .  &  .     \\[1ex]
    1 & .  & 15 & 40 & 45 & 24 & 5   &   .    & .      &  .    \\[1ex]
    2 & .  & 7 &  36  & 75 & 80 & 45 & \tbf{12}  & \tbf{1} & \tbf{0}   \\[1ex]
    \end{tabular}
}}~,\]
which shows that $S$ is linearly normal, but not ACM surface. By Degree formula in Corollary \ref{app_hilb}, we can check $\deg(S)=6+1+{6\choose6}\cdot\tbf{12}-{7\choose6}\cdot\tbf{1}+{8\choose6}\cdot\tbf{0}=12$. The irregularity $q(S)$ is also given using tailing Betti numbers as $\tbf{1}-(8+1)\cdot\tbf{0}=1$. In general, Hilbert polynomial can be given as
\[P_S(t)=12{t+1\choose 2}+3{t\choose 1}~,\]
by Corollary \ref{app_hilb}. Now, let us check the cohomology values $h^i(\s I_{S/\P^8}(2-i))$. We already know that $h^1(\s I_{S/\P^8}(1))=0, h^2(\s I_{S/\P^8})=h^1(\s O_{S/\P^8})=1$. What is $h^3(\s I_{S/\P^8}(-1))$? By lower and upper bounds in Corollary \ref{app_coh} (c), we can bound it as
\[3=\tbf{12}-(8+1)\cdot\tbf{1}+{8+2\choose2}\cdot\tbf{0}\le h^3(\s I_{S/\P^8}(-1))\le\tbf{12}-8\cdot\tbf{1}+{8+1\choose2}\cdot\tbf{0}=4~.\]
But, by Serre duality and K\"unneth formula, we know that 
\begin{align*}
h^3(\s I_{S/\P^8}(-1))&=h^2(\s O_{S/\P^8}(-1))=h^0(\omega_S(1))=h^0(\omega_{C_1}(1))\cdot h^0(\omega_{C_2}(1))\\
&=h^0(\s O_{C_1/\P^2}(2-3+1))\cdot h^0(\s O_{C_2/\P^2}(3-3+1))=1\cdot3=3~,
\end{align*}
and this shows that the lower bound in Corollary \ref{app_coh} (c) is sharp.
\item[(c)] Let $X\subset\P^{10}$ be a smooth Segre $5$-fold ($e=5$), which is a generic projection of the variety $\P^2\times \P^3\subset \P^{11}$ into $\P^{10}$. The degree of $X\subset \P^{10}$ is ${3+2\choose 2}=10$ and $X$ has a $3$-regular Betti tables of the projective dimension $\ell_0=10$ as follows:
\[
{\small\texttt{
 \begin{tabular}{c|ccccccccccccccccccccccccccccc}
        &0 & 1 & 2 & 3 & 4 & 5 & 6 & 7 & 8 & 9 & 10\\[1ex]
        \hline\\
        0 & 1 & 0   & 0 & 0 & 0 & 0 & 0 & 0 & 0 & 0 & 0 \\[1ex]
        1 & 0 & 6   & 0 & 0 & 0 & 0 & 0 & 0 & 0 & 0 & 0\\[1ex]
        2 & 0 & 20 &140& 331   & 471 & \tbf{465}   & \tbf{330} &\tbf{165}&\tbf{55}&\tbf{11}&\tbf{1}\\[1ex]        
 \end{tabular}
}}~.\]
First, we would like to mention that all the tailing Betti number $\beta_{i,2}$ except $\beta_{5,2}$ coincide with the lower bound ${\ell_0+1\choose i+1}$ in Corollary \ref{for_compt} (b) in this case; that is, this lower bound is optimal. Second, by Corollary \ref{main_for_cor}, we calculate 1-normalities of general linear sections
\[\small
\left[
\begin{array}{cccccccccccccc}
h^1(\mathcal I_{X\cap \Lambda^{5}/\Lambda^{5}}(1))\\
h^1(\mathcal I_{X\cap \Lambda^{6}/\Lambda^{6}}(1))\\
h^1(\mathcal I_{X\cap \Lambda^{7}/\Lambda^{7}}(1))\\
h^1(\mathcal I_{X\cap \Lambda^{8}/\Lambda^{8}}(1))\\
h^1(\mathcal I_{X\cap \Lambda^{9}/\Lambda^{9}}(1))\\
h^1(\mathcal I_{X/\P^{10}}(1))
\end{array}
\right]
=\Xi(10,5)^{-1}\cdot\left[
\begin{array}{cccccccccccccc}
\beta_{5,2}\\
\beta_{6,2}\\
\beta_{7,2}\\
\beta_{8,2}\\
\beta_{9,2}\\
\beta_{10,2}
\end{array}
\right]
=\left[
\begin{array}{cccccccccccccc}
1&-6& 21&-56&126&-252\\
0&1&-7&28 &-84 & 210\\
0&0&1& -8& 36& -120 \\
0&0&0&1&-9 & 45\\
0&0&0&0&1& -10 \\
0&0&0&0&0&  1
\end{array}
\right]
\left[
\begin{array}{cccccccccccccc}
465\\
330\\
165\\
55\\
11\\
1
\end{array}
\right]
=
\left[
\begin{array}{cccccccccccccc}
4\\
1\\
1\\
1\\
1\\
1
\end{array}
\right]
\]
and from these sectional 1-normalities we can compute many invariants of $X$; $\deg(X)$ is recovered by $h^0(\s O_{X\cap \Lambda^{5}}(1))=h^1(\s I_{X\cap \Lambda^{5}/\Lambda^{5}}(1))+h^0(\s O_{\Lambda^5}(1))=4+6=10$ and the \ti{sectional genus} of $X$, $g_s(X)$ can be obtained by $h^1(\s O_{X\cap \Lambda^{6}})=h^2(\s I_{X\cap \Lambda^{6}/\Lambda^{6}})=h^1(\s I_{X\cap \Lambda^{5}/\Lambda^{5}}(1))-h^1(\s I_{X\cap \Lambda^{6}/\Lambda^{6}}(1))=4-1=3$. More generally, by Corollary \ref{app_hilb} the Hilbert polynomial of $X$ can be computed as
\[P_X(t)=10{t+4\choose 5}-2{t+3\choose 4}+{t+2\choose 3}+{t+1\choose 2}+{t\choose 1}+1~,\]
which tells us $\chi(\s O_X)=1$ so that the irregularity $h^1(\s O_X)=0$. Finally, using Corollary \ref{app_coh} we check the cohomology values $h^1(\mathcal I_{X/\P^{10}}(1))=\tbf{1}, h^2(\mathcal I_{X/\P^{10}})=\tbf{11}-(10+1)\cdot\tbf{1}=0$, and $h^3(\mathcal I_{X/\P^{10}}(-1))\le\tbf{55}-10\cdot\tbf{11}+{10+1\choose 2}\cdot\tbf{1}=0$. Thus, $h^3(\mathcal I_{X/\P^{10}}(-1))=0$, which shows that the upper bound in Corollary \ref{app_coh} (c) is also sharp. 
\end{itemize}
\end{Ex}

\end{document}